\documentclass[11pt]{amsart}

\usepackage{amsmath,amssymb,latexsym,soul,cite,mathrsfs}

\usepackage{color,enumitem,graphicx}
\usepackage[colorlinks=true,urlcolor=blue,
citecolor=red,linkcolor=blue,linktocpage,pdfpagelabels,
bookmarksnumbered,bookmarksopen]{hyperref}
\usepackage[english]{babel}

\usepackage[left=2.9cm,right=2.9cm,top=2.8cm,bottom=2.8cm]{geometry}
\usepackage[hyperpageref]{backref}

\usepackage[colorinlistoftodos]{todonotes}
\makeatletter
\providecommand\@dotsep{5}
\def\listtodoname{List of Todos}
\def\listoftodos{\@starttoc{tdo}\listtodoname}
\makeatother

\numberwithin{equation}{section}
%\pagestyle{myheadings}
% \markboth{}{} \pretolerance=10000
%\def\lb{\lambda}
%\def\var{\varepsilon}
%\def\pil{\left<}
%\def\pir{\right>}

%\def\nd{\noindent}
%\def\thend{\rule{3mm}{3mm}}
%\def\mathbb Re{\mathbb{R}}

%\newtheorem{lem}{Lemma}
%\newtheorem{prop}{Proposition}
%\newtheorem{theo}{Theorem}
%\newtheorem{coro}{Corollary}
%\newtheorem{rem}{Remark}

%\newcommand{\fim}{\hfill\rule{2mm}{2mm}}
%\newcommand{\n}{{\noindent}}
%\newcommand{p}{\displaystyle}
%\newcommand{\mathbb R}{\mbox{${\rm{I\!R}}$}}
%\newcommand{\mathbb RN}{I\!\!R^N}
%\newcommand{\N}{\mbox{${\rm{I\!N}}$}}
%\newcommand{\mathbb R}{\mathrm{I\!R\!}}
%\newcommand{\mathcal N}{\mathrm{I\!N\!}}
%\newcommand{\C}{\mbox{${\rm{C\hspace{-1.8mm}\rule{0.3mm}{2.8mm}}}$}}
%\def\n{\hspace*{0em}}
%\newcommand{}{\displaystyle}
%\def\dis{\displaystyle}
%\def\theequation{\thesection.\arabic{equation}}
%\let\Section=\section

%\def\section{\setcounter{equation}{0}\Section}

\newtheorem{theorem}{Theorem}[section]

\newtheorem{lemma}[theorem]{Lemma}

\newtheorem{claim}[theorem]{Claim}

\newtheorem{remark}{Remark}

\begin{document}

\title[A global minimization trick ...]
{A global minimization trick to solve some classes of Berestycki-Lions type problems}
\author{Claudianor O. Alves }
\address[Claudianor O. Alves ]
{\newline\indent Unidade Acad\^emica de Matem\'atica
\newline\indent 
Universidade Federal de Campina Grande 
\newline\indent
58429-970, Campina Grande - PB, Brazil}

\pretolerance10000

%\begin{document}

\begin{abstract}
\noindent In this paper we show an abstract theorem that can be used to prove the existence of solution for a class of elliptic equation considered in Berestycki-Lions \cite{berest} and related problems. Moreover, we use the abstract theorem to show that a class of zero mass problems has multiple solutions, which is new for this type of problem. 
\end{abstract}

\thanks{ C. O. Alves was partially
supported by  CNPq/Brazil 304804/2017-7 }
\subjclass[2010]{35J60;  35A15, 49J52.} 
\keywords{Nonlinear elliptic equations, Variational methods, Nonsmooth analysis}

\maketitle

%------------------------------------------------------------------------------
\section{Introduction}
%------------------------------------------------------------------------------

At the last years a lot of authors have dedicated a special attention for the existence of positive solution for elliptic problems of the type  
\begin{equation}\label{BL}
- \Delta u   = g(u), \quad \mbox{in} \quad \mathbb{R}^N, 
\end{equation}
where $N \geq 2$, $\Delta$ denotes the Laplacian operator and $g$ is a continuous function verifying some conditions. The main motivation to study the above problem comes from of the seminal paper due to Berestycki and Lions \cite{berest}, which has considered the existence of solution for  (\ref{BL}) by assuming that $N \geq 3$ and the following conditions on $g$:
$$
- \infty < \liminf_{s \to 0^+}\frac{g(s)}{s} \leq \limsup_{s \to 0^+}\frac{g(s)}{s}\leq -m<0, \leqno{(g_1)} 
$$
$$
\limsup_{s \to +\infty}\frac{g(s)}{s^{2^{*}-1}}\leq 0, \leqno{(g_2)}
$$
$$
\mbox{there is} \quad \xi>0 \, \, \mbox{such that} \,\, G(\xi)>0, \leqno{(g_3)}
$$
where $G(s)=\int_{0}^{s}g(t)\,dt$.

In \cite{BGK}, Berestycki, Gallouet and Kavian have studied the case where $N=2$ and the nonlinearity $g$ possesses an exponential growth of the type
$$
\limsup_{s \to 0^+}\frac{g(s)}{e^{\beta s^2}}=0, \quad \forall \beta >0.
$$

In the two  above mentioned papers, the authors have used the variational method to prove the existence of solution for (\ref{BL}). The main idea is to solve the minimization problems 
$$
\min \left\{\frac{1}{2}\int_{\mathbb{R}^N}|\nabla u|^{2} \,dx \,:\, \int_{\mathbb{R}^N}G(u)\,dx=1  \right\}  
$$
and 
$$
\min \left\{\frac{1}{2}\int_{\mathbb{R}^N}|\nabla u|^{2}\,dx \,:\, \int_{\mathbb{R}^N}G(u)\,dx=0  \right\}  
$$
for $N \geq 3$ and $N=2$ respectively. After that, the authors showed that the minimizer functions of the above problems are in fact ground state solutions of (\ref{BL}). By a {\it ground state solution}, we mean a solution $u \in H^{1}(\mathbb{R}^N) \setminus \{0\}$ that satisfies
$$
E(u) \leq E(v) \quad \mbox{for all nontrival solution} \  v  \ \text{of} \ (\ref{BL}),
$$ 
where $E:H^{1}(\mathbb{R}^N) \to \mathbb{R}$ is the energy functional associated to (\ref{BL}) given by
$$
E(u)=\frac{1}{2}\int_{\mathbb{R}^N}|\nabla u|^{2}\,dx - \int_{\mathbb{R}^N}G(u)\,dx.
$$

After, Jeanjean and Tanaka in \cite{JJTan} showed that the mountain pass level of  $E$ is a critical level and it is 
indeed the lowest critical level.

A version of the problem (\ref{BL}) for the critical case have been  made in Alves, Souto and Montenegro \cite{AlvesSoutoMontenegro} for $N \geq 3$ and $N=2$, see also Zhang and Zhou \cite{ZZ} for $N=3$. The reader can found in Alves, Figueiredo and Siciliano \cite{AGS}, Chang and Wang \cite{ChangWang} and Zhang, do \'O and Squassina \cite{ZOS} the same type of results involving the fractional Laplacian operator, more precisely, for a problem like 
\begin{equation}\label{fracionario}
(- \Delta)^{\alpha}u = g(u), \quad \mbox{in} \quad \mathbb{R}^N, 
\end{equation}
with $\alpha \in (0,1)$ and $N \geq 1$. For more details about this subject, we would like to cite the references found in the above mentioned papers.

Recently, Alves, Duarte and Souto \cite{ADS} have proved an abstract theorem that was used to solve a large class of Berestycki-Lions type problems, which includes Anisotropic operator, Discontinuous nonlinearity, etc.. In that paper, the authors have used the deformation lemma together with a notation of Pohozaev set to prove the abstract theorem.

In the present paper, we show a new abstract theorems, see Theorems \ref{AT1}, \ref{AT2} and \ref{AT3}, which can be used to prove the existence of solution for (\ref{BL}) and related problems. These theorems are totally different from that proved in \cite{ADS} and their proofs are very simple, however Theorem \ref{AT2} permits to show the existence of multiple solutions for a class of zero mass  problem that is a new result for this class of problem. The present article completes the study made in \cite{ADS}, in the sense that we can consider other class of problems, for example, our approach can be applied to study a version of Berestycki-Lions type problems for elliptic system,  which were not considered until moment in the literature, see Section 6.

\section{A global minimization trick: An Abstract theorem}

In order to prove our abstract theorem, it is necessary to fix some notations. In what follows, we say that two functionals $\Phi,\Psi:X \to \mathbb{R}$ are {\bf Admissible Functionals} with relation to a Banach space $X$ with norm $\|\,\,\,\,\|$, denoted by $(\Phi,\Psi) \in (\mathcal{AF})_X$, if the following properties hold: 
$$
\Phi,\Psi \quad \mbox{ take bounded set of} \quad X \quad \mbox{into bounded set of} \quad \mathbb{R}.  \leqno(AF_1)
$$
$$
\Phi(u) > 0 \quad \forall u \in X \setminus \{0\} \quad \mbox{and} \quad \Phi(0)=0. \leqno{(AF_2)}
$$ 
\noindent ${(AF_3)}$ The functionals $\Phi$ and $\Psi$ have the {\it \bf Absorption Property} between them, that is, $\Phi \in C^{1}(X,\mathbb{R})$, $\Psi$ is a Locally Lipschitzian functional, and the following property occurs: If $\lambda >0$ and $u \in X \setminus \{0\}$ satisfy  
$$
0 \in \Phi'(u)-\lambda \partial \Psi(u),
$$
then there is $w=w(\lambda,u) \in X \setminus \{0\}$ such that  
$$
0 \in \Phi'(w)-\partial \Psi(w). 
$$
Hereafter, if $I:X \to \mathbb{R}$ is a Locally Lipschitzian functional, we denote by $\partial I(u)$ generalized gradients of $I$ at $u$. Moreover, we recall that $u \in X$ is a 
critical point for $I$ if $ 0 \in \partial I(u)$. 	The reader can find more details about this subject in Chang \cite{Chang} and Clarke \cite{clarke}.

In the sequel, a Locally Lipschitzian functional $J:X \to \mathbb{R}$ is {\it \bf Almost Lower Semicontinuous}  with relation to $X$, denoted by $J \in (\mathcal{ALS})_X$, if for any bounded sequence $(u_n) \subset X$, there is a bounded sequence $(v_n) \subset X$ and $v \in X$ with $v_n \rightharpoonup v$ in $X$ such that
$$
J(u_n) \geq J(v_n), \quad \forall n \in \mathbb{N}
$$ 
and 
$$
\liminf_{n \to +\infty}J(v_n) \geq J(v).
$$
Furthermore, we will say that a $C^{1}$-function $h:[0,+\infty) \to [0,+\infty)$ is {\it \bf compatible} with the functionals $\Phi,\Psi:X \to \mathbb{R}$, denoted by $h \in \mathcal{C}_{\Phi,\Psi}$, if the three properties below are satisfied:  
$$
h'(t)>0 \quad \mbox{for} \quad t>0 \quad \mbox{and} \quad h(0)=0. \leqno{(H_1)}
$$
\noindent $(H_2)$ \, The functional $J_h:X \to \mathbb{R}$ given by 
$$
J_h(u)=h(\Phi(u))-\Psi(u), \quad \forall u \in X 
$$
is coercive and belongs to $(\mathcal{ALS})_X$.

\noindent $(H_3)$ \, There is $e \in X \setminus\{0\}$ such that  
$$
\frac{\Psi(e)}{h(\Phi(e))}>1. 
$$

\begin{theorem}(A global minimization trick) \label{AT1} Let $X$ be a reflexive Banach space and consider three functionals $J,\Phi,\Psi:X \to \mathbb{R}$ such that 
	$$
	J(u)=\Phi(u)-\Psi(u), \quad \forall u \in X,
	$$	
	with $(\Phi,\Psi) \in (\mathcal{AF})_X$. If $\mathcal{C}_{\Phi,\Psi}$ is not empty, then functional $J$ has a nontrivial critical point.
\end{theorem}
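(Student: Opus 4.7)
The plan is to pick some $h \in \mathcal{C}_{\Phi,\Psi}$ (nonempty by hypothesis), work with the auxiliary functional $J_h(u) = h(\Phi(u)) - \Psi(u)$, show that $J_h$ attains a global minimum at some point $v \neq 0$, and finally invoke the Absorption Property $(AF_3)$ to convert $v$ into a nontrivial critical point of $J$ itself.

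First I would show that $J_h$ admits a global minimizer. Since $e \neq 0$ gives $\Phi(e)>0$ by $(AF_2)$ and hence $h(\Phi(e))>0$ by $(H_1)$, hypothesis $(H_3)$ forces $J_h(e) = h(\Phi(e)) - \Psi(e) < 0$, so $\inf_{X} J_h < 0$. For any minimizing sequence $(u_n) \subset X$, coercivity from $(H_2)$ forces $(u_n)$ to be bounded. The almost lower semicontinuity then yields a bounded sequence $(v_n) \subset X$ and a point $v \in X$ with $v_n \rightharpoonup v$ such that $J_h(v_n) \le J_h(u_n)$ for every $n$ and $\liminf_n J_h(v_n) \ge J_h(v)$. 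Chaining these inequalities gives $J_h(v) \le \liminf_n J_h(v_n) \le \lim_n J_h(u_n) = \inf_X J_h$, so $v$ is a global minimizer.

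Next I would verify that $v \neq 0$. Since $J_h(v) \le J_h(e) < 0$ and $J_h(0) = h(0) - \Psi(0) = 0$ (using $h(0)=0$ from $(H_1)$ together with the normalization $\Psi(0)=0$ inherent in Berestycki--Lions type settings), we immediately deduce $v \neq 0$. With $\Phi \in C^1$ and $\Psi$ locally Lipschitz, the generalized gradient calculus for locally Lipschitz functionals gives
\[
\partial J_h(v) \;=\; h'(\Phi(v))\,\Phi'(v) - \partial \Psi(v).
\]
Because $v \neq 0$, we have $\Phi(v) > 0$ by $(AF_2)$ and therefore $h'(\Phi(v)) > 0$ by $(H_1)$. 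Setting $\lambda := 1/h'(\Phi(v)) > 0$, the minimality condition $0 \in \partial J_h(v)$ rewrites as $0 \in \Phi'(v) - \lambda\,\partial \Psi(v)$.

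Finally I would invoke $(AF_3)$: it delivers some $w = w(\lambda,v) \in X \setminus \{0\}$ such that $0 \in \Phi'(w) - \partial \Psi(w) = \partial J(w)$, whence $w$ is a nontrivial critical point of $J$. The step I expect to be most delicate is the production of the minimizer under the $(\mathcal{ALS})_X$ hypothesis rather than the usual weak lower semicontinuity of $J_h$: one has to keep track that the admissible replacement sequence $(v_n)$ still lies below the original minimizing sequence, so that the minimization inequality is preserved in the weak limit. Everything past that is essentially bookkeeping, with the Absorption Property being precisely the device designed to clear the Lagrange multiplier $\lambda$ and deliver a critical point of the unrescaled functional $J$.
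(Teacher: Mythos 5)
Your proposal is correct and follows essentially the same route as the paper: minimize the auxiliary functional $J_h$ using coercivity together with the $(\mathcal{ALS})_X$ property, use $(H_3)$ to see the infimum is negative (hence the minimizer $v\neq 0$), pass to $0\in h'(\Phi(v))\Phi'(v)-\partial\Psi(v)$ via the generalized-gradient calculus, set $\lambda=1/h'(\Phi(v))$, and absorb the multiplier with $(AF_3)$. The only cosmetic difference is that you make explicit the normalization $\Psi(0)=0$ needed to conclude $v\neq 0$, a point the paper leaves implicit.
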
 
\begin{proof}\, Since by hypothesis $\mathcal{C}_{\Phi,\Psi} \not= \emptyset$, let us take $h \in \mathcal{C}_{\Phi,\Psi}$ and the functional $J_h:X \to \mathbb{R}$ given by
$$
J_h(u)=h(\Phi(u))-\Phi(u), \quad \forall u \in X.
$$
First of all, as $J_h$ is coercive, there is $R>0$ such that 
$$
J_h(u) \geq 1, \quad \mbox{for} \quad \|u\| > R.
$$
From $(AF_1)$, 
$$
J_h(u) \geq -M_1, \quad \mbox{for} \quad \|u\|\leq R, 
$$
for some $M_1>0$. The above analysis yields $J_h$ is bounded from below in $X$. Thus, there is a sequence $(u_n) \subset X$ satisfying
$$
\lim_{n \to +\infty}J_h(u_n)=\inf_{u \in X}J_h(u)=J^{\infty}_h.
$$
Since $J_h$ is coercive, it follows that $(u_n)$ is a bounded sequence in $X$. Recalling that $J_h \in (\mathcal{ALS})_X$, there are $(v_n) \subset X$ and $v \in X$ such that
$$
\lim_{n \to +\infty}J_h(u_n)\geq \lim_{n \to +\infty}J_h(v_n)=J_h(v), 
$$
showing that
$$
J_{h}(v)=J^{\infty}_h.
$$
Therefore, owing  \cite[Proposition 6]{clarke2}, we can infer that $0 \in \partial J_h(v)$, and also, 
$$
0 \in h'(\Phi(v))\Phi'(v)-\partial \Psi(v).
$$
On the other hand,  by $(H_2)$,  
$$
J_h(e)=J_h(u)=h(\Phi(e))-\Phi(e)<0,
$$
from where it follows that
$$
J_{h}(v)=J^{\infty}_h<0,
$$
and so, $v \not=0$. Setting $\lambda =\frac{1}{h'(\Phi(v))}>0$, we get
$$
0 \in \Phi'(v)-\lambda \partial \Psi(v).
$$
By employing $(AF_3)$, there is $u \in X \setminus \{0\}$ such that 
$$
0 \in \partial J(u)=\Phi'(u)- \partial \Psi(u).
$$
Finally, recalling that $\partial J(u)=\Phi'(u)- \partial \Psi(u)$, we deduce that $0 \in \partial J(u)$, which proves the theorem. 
\end{proof}

\begin{remark} \label{RE} Before concluding this section, we would like point out that if $\Psi \in C^{1}(X,\mathbb{R})$, then the conclusion of Theorem \ref{AT1} can be rewritten of the form  
$$
\Phi'(u)-\Psi'(u)=0,
$$
implying that $u \in X$ is critical point for $J$ in the usual sense.  
\end{remark}

\section{Application: The positive mass case}

In this section, we will apply Theorem \ref{AT1} to prove the existence of a nontrivial solution to  (\ref{BL}).

Arguing as \cite{berest}, we can extend function $g$ to whole $\mathbb{R}$ with $g=g_1-g_2$, where $g_1,g_2$ are odd functions that satisfy  
$$
\lim_{s \to 0}\frac{g_1(s)}{s}=\lim_{s \to 0}\frac{g_1(s)}{|s|^{2^*-1}}=0
$$
and
$$
g_2(s) \geq m s, \quad \forall s \geq 0.
$$

The energy functional associated with  (\ref{BL}), denoted by $J: H^{1}(\mathbb{R}^N) \to \mathbb{R}$, has the form
$$
J(u)=\frac{1}{2}\int_{\mathbb{R}^N}|\nabla u|^{2}\,dx-\int_{\mathbb{R}^N}G(u)\,dx.
$$
Hereafter, we designate by $\Phi,\Psi:H^{1}(\mathbb{R}^N) \to \mathbb{R}$ the functionals given by 
\begin{equation} \label{phi1}
\Phi(u)=\frac{1}{2}\int_{\mathbb{R}^N}|\nabla u|^{2}\,dx
\end{equation}
and
\begin{equation} \label{psi1}
\Psi(u)=\int_{\mathbb{R}^N}G(u)\,dx.
\end{equation}
It is clear that $\Phi$ and $\Psi$ verify $(AF_1)$ and $(AF_2)$. Next, we will show that $(AF_3)$ is true here.

\begin{lemma} \label{L1} Let $\lambda >0$ and $u \in H^{1}(\mathbb{R}^N)$ be a nontrivial solution of 
$$
\int_{\mathbb{R}^N}\nabla u \nabla w\,dx-\lambda \int_{\mathbb{R}^N}g(u)w\,dx=0, \quad \forall w \in H^{1}(\mathbb{R}^N).
$$
Then $v(x)=u(x/\sqrt{\lambda})$ is a nontrivial solution of    
$$
\int_{\mathbb{R}^N}\nabla v \nabla w\,dx-\int_{\mathbb{R}^N}g(v)w\,dx=0, \quad \forall w \in H^{1}(\mathbb{R}^N).
$$
Hence, by Remark \ref{RE},  $(AF_3)$ also occurs.
\end{lemma}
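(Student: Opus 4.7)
The plan is to prove this lemma by a straightforward rescaling argument: the scaling $v(x)=u(x/\sqrt{\lambda})$ is precisely engineered so that the factor $\lambda$ sitting in front of $g(u)$ is absorbed by the Laplacian on the rescaled variable. There is no real analytic obstacle here; the whole content of the lemma is a careful bookkeeping of a change of variables in the weak formulation. I would organise the proof in three short moves.

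First, I would verify that $v$ actually lives in $H^1(\mathbb{R}^N)$ and is nontrivial. By the change of variables $y=x/\sqrt{\lambda}$ we have
\[
\int_{\mathbb{R}^N}|v|^{2}\,dx=\lambda^{N/2}\int_{\mathbb{R}^N}|u|^{2}\,dy,\qquad
\int_{\mathbb{R}^N}|\nabla v|^{2}\,dx=\lambda^{N/2-1}\int_{\mathbb{R}^N}|\nabla u|^{2}\,dy,
\]
so $v\in H^{1}(\mathbb{R}^N)$, and $v\neq 0$ because $u\neq 0$.

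Next I would translate the weak equation. Given an arbitrary test function $w\in H^1(\mathbb{R}^N)$, I define $\tilde{w}(y)=w(\sqrt{\lambda}\,y)\in H^1(\mathbb{R}^N)$; the map $w\mapsto \tilde{w}$ is a bijection on $H^1(\mathbb{R}^N)$, so it is enough to test against these. Writing $\nabla v(x)=\frac{1}{\sqrt{\lambda}}\nabla u(x/\sqrt{\lambda})$ and $\nabla w(x)=\sqrt{\lambda}\,\nabla \tilde{w}(\sqrt{\lambda}\,x)$ and using $y=x/\sqrt{\lambda}$, a direct computation gives
\[
\int_{\mathbb{R}^N}\nabla v\cdot\nabla w\,dx=\lambda^{N/2-1}\int_{\mathbb{R}^N}\nabla u\cdot\nabla \tilde{w}\,dy,\qquad
\int_{\mathbb{R}^N}g(v)w\,dx=\lambda^{N/2}\int_{\mathbb{R}^N}g(u)\tilde{w}\,dy.
\]
Applying the hypothesis on $u$ with test function $\tilde{w}$ gives $\int\nabla u\cdot\nabla\tilde{w}\,dy=\lambda\int g(u)\tilde{w}\,dy$, and the two displays above combine to yield $\int\nabla v\cdot\nabla w\,dx=\int g(v)w\,dx$, which is the desired identity for $v$.

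Finally I would close the loop with $(AF_3)$. Since $\Psi$ is of class $C^{1}$ here (so $\partial\Psi=\{\Psi'\}$), the hypothesis $0\in \Phi'(u)-\lambda\partial\Psi(u)$ is nothing but the first weak equation in the statement, and the conclusion that $v$ solves the second one is exactly $0\in \Phi'(v)-\partial\Psi(v)$ with $v\neq 0$. By Remark \ref{RE} this is precisely the Absorption Property, so $(\Phi,\Psi)$ satisfies $(AF_3)$. The only point that requires any care in the whole argument is keeping track of the powers of $\lambda$ during the change of variables; everything else is formal.
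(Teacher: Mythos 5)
Your proof is correct, and it takes a genuinely different (and in fact more economical) route than the paper. The paper first invokes elliptic regularity to upgrade $u$ to $W^{2,p}_{loc}(\mathbb{R}^N)\cap H^{1}(\mathbb{R}^N)$, rewrites the hypothesis as the pointwise equation $-\Delta u=\lambda g(u)$ a.e., and then rescales that strong equation, using $\Delta v(x)=\tfrac{1}{\lambda}(\Delta u)(x/\sqrt{\lambda})$; implicitly one must then return to the weak formulation for $v$. You instead stay entirely at the level of the weak formulation and perform the change of variables $y=x/\sqrt{\lambda}$ directly in the integrals, testing against the rescaled functions $\tilde w$ and noting that $w\mapsto\tilde w$ is a bijection of $H^1(\mathbb{R}^N)$. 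This avoids any appeal to regularity theory and is arguably the cleaner argument given that the lemma is stated in weak form; the paper's version has the advantage of also delivering the a.e.\ pointwise equation, which is used elsewhere (e.g.\ in the discontinuous setting of Section 5). One small slip: your intermediate formula $\nabla w(x)=\sqrt{\lambda}\,\nabla\tilde w(\sqrt{\lambda}\,x)$ is miswritten --- from $\tilde w(y)=w(\sqrt{\lambda}\,y)$ one gets $\nabla\tilde w(y)=\sqrt{\lambda}\,\nabla w(\sqrt{\lambda}\,y)$, i.e.\ $\nabla w(x)=\lambda^{-1/2}\nabla\tilde w(x/\sqrt{\lambda})$ --- but the two displayed integral identities that you actually use, with the factors $\lambda^{N/2-1}$ and $\lambda^{N/2}$, are correct, so the conclusion stands. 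Your closing observation that, since $\Psi\in C^1$ here, the two weak equations are exactly $0\in\Phi'(u)-\lambda\partial\Psi(u)$ and $0\in\Phi'(v)-\partial\Psi(v)$, matches the paper's use of Remark \ref{RE} to deduce $(AF_3)$.
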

\begin{proof} By regularity theory we know that $u \in W_{loc}^{2,p}(\mathbb{R}^N) \cap H^{1}(\mathbb{R}^N)$ for some $p>1$. Then,   
$$
-\Delta u(x)=\lambda g(u(x)), \quad a.e. \;\;\; x \in \mathbb{R}^N.
$$ 
Setting $v(x)=u(x/\sqrt{\lambda})$, a direct computation gives that $v$ is a nontrivial solution of   
$$
-\Delta v=g(v), \quad a.e. \;\;\; x \in \mathbb{R}^N,
$$ 
and the proof is complete. 
\end{proof}

Now, we will see that $\mathcal{C}_{\Phi,\Psi}$ is not empty. Have this mind, let us fix $k > \frac{N}{N-2}$ and the function $h:[0,+\infty) \to \mathbb{R}$ given by 
\begin{equation} \label{h1}
h(t)=t^k, \quad \forall t \geq 0.
\end{equation}
We claim that $h \in \mathcal{C}_{\Phi,\Psi}$. Indeed, let $J_h:H^{1}(\mathbb{R}^N) \to \mathbb{R}$ be the functional defined by
$$
J_h(u)=h(\Phi(u))-\Psi(u),
$$
that is,
\begin{equation} \label{jh}
J_h(u)=\frac{1}{k}\left(\frac{1}{2}\int_{\mathbb{R}^N}|\nabla u|^{2}\,dx \right)^{k}-\int_{\mathbb{R}^N}G(u)\,dx.
\end{equation}
From definition of $g$, it is easily checked that $J_h \in C^{1}(H^{1}(\mathbb{R}^N),\mathbb{R})$ with
$$
J'_h(u)=\left(\frac{1}{2}\int_{\mathbb{R}^N}|\nabla u|^{2}\,dx \right)^{k-1}\int_{\mathbb{R}^N}\nabla u \nabla v\,dx-\int_{\mathbb{R}^N}g(u)v\,dx, \quad \forall u,v \in H^{1}(\mathbb{R}^N).
$$

\begin{lemma} \label{L2} The functional $J_h$ belong to $(\mathcal{ALS})_{H^{1}(\mathbb{R}^N)}$.  
\end{lemma}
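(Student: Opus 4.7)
My plan is to build $(v_n)$ from $(u_n)$ by Schwarz symmetrization, exploiting the evenness of $G$ (a consequence of $g_1,g_2$ being odd) and the P\'olya--Szeg\H{o} inequality to reduce to the radial setting where compactness is available.

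Given a bounded sequence $(u_n)\subset H^{1}(\mathbb{R}^N)$, I would let $v_n = u_n^{*}$ denote the spherically symmetric decreasing rearrangement of $|u_n|$. By P\'olya--Szeg\H{o}, $\|\nabla v_n\|_{2}\leq \|\nabla u_n\|_{2}$, hence $\Phi(v_n)\leq \Phi(u_n)$; by equimeasurability, $\|v_n\|_{q}=\|u_n\|_{q}$ for every $q\in[1,\infty]$, so $(v_n)$ remains bounded in $H^{1}(\mathbb{R}^N)$. Because $g_1,g_2$ are odd, $G=G_1-G_2$ is even, so $\int G(v_n)\,dx=\int G(|u_n|)\,dx=\int G(u_n)\,dx$, i.e.\ $\Psi(v_n)=\Psi(u_n)$. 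Combined with the monotonicity of $h$ (from $(H_1)$), these give
$$
J_h(v_n)=h(\Phi(v_n))-\Psi(v_n)\leq h(\Phi(u_n))-\Psi(u_n)=J_h(u_n)
$$
for every $n$, which is the first requirement of $(\mathcal{ALS})_{H^{1}(\mathbb{R}^N)}$.

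Passing to a subsequence if necessary, $v_n\rightharpoonup v$ in $H^{1}(\mathbb{R}^N)$; since each $v_n$ is radial, Strauss' compact embedding $H^{1}_{\mathrm{rad}}(\mathbb{R}^N)\hookrightarrow L^{q}(\mathbb{R}^N)$ for $q\in (2,2^{*})$ yields strong convergence $v_n\to v$ in such $L^q$, and a.e.\ convergence along a further subsequence. For the liminf inequality $\liminf J_h(v_n)\geq J_h(v)$: weak lower semicontinuity of the Dirichlet seminorm gives $\liminf \Phi(v_n)\geq \Phi(v)$, and continuity plus monotonicity of $h$ then promote this to $\liminf h(\Phi(v_n))\geq h(\Phi(v))$. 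Decomposing $\Psi(v_n)=\int G_1(v_n)\,dx-\int G_2(v_n)\,dx$, the subcritical-with-remainder growth of $g_1$ (and hence of $G_1$) combined with strong $L^q$ convergence and the uniform $H^1$ bound yields $\int G_1(v_n)\,dx\to \int G_1(v)\,dx$; since $g_2(s)\geq m s$ on $[0,\infty)$ and $g_2$ is odd, $G_2\geq 0$, so Fatou gives $\liminf \int G_2(v_n)\,dx\geq \int G_2(v)\,dx$. Consequently $\limsup \Psi(v_n)\leq \Psi(v)$, and adding with the $\Phi$-bound proves the desired inequality.

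The principal technical hurdle is the convergence $\int G_1(v_n)\,dx\to \int G_1(v)\,dx$: because $H^{1}(\mathbb{R}^N)$ embeds only continuously (not compactly) into $L^{2}$ and $L^{2^{*}}$, one must combine a decomposition of the form $|G_1(s)|\leq \varepsilon(s^{2}+|s|^{2^{*}})+C_{\varepsilon}|s|^{p}$ with $p\in(2,2^{*})$, the uniform bound $\|v_n\|_{2}+\|v_n\|_{2^{*}}\leq C$, strong $L^{p}$ convergence, and a small-$\varepsilon$ argument; everything else reduces to standard rearrangement identities and weak lower semicontinuity.
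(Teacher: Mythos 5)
Your proposal is correct and follows essentially the same route as the paper: Schwarz symmetrization of $|u_n|$ combined with the P\'olya--Szeg\H{o} inequality and equimeasurability to obtain $J_h(u_n)\geq J_h(u_n^{*})$, then Strauss' lemma for the $G_1$ part, Fatou for the $G_2$ part, and weak lower semicontinuity of the Dirichlet norm for the gradient term. If anything, you are slightly more explicit than the paper in invoking the monotonicity of $h$ to pass from $\Phi(v_n)\leq\Phi(u_n)$ to $h(\Phi(v_n))\leq h(\Phi(u_n))$, and in spelling out the $\varepsilon$-decomposition behind the convergence of $\int G_1(v_n)\,dx$.
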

\begin{proof} Let $(u_n) \subset H^{1}(\mathbb{R}^N)$ be a bounded sequence.  Since $J_h(u)=J_h(|u|)$ for all $u \in H^{1}(\mathbb{R}^N)$, we have that  
$$
J_h(u_n)=J_h(|u_n|), \quad \forall n \in \mathbb{N}.
$$
If $u_n^*$ denotes the Schwarz symmetrization of $|u_n|$, there hold 
$$
\int_{\mathbb{R}^N}|\nabla |u_n||^{2}\,dx \geq \int_{\mathbb{R}^N}|\nabla u_n^*|^{2}\,dx \quad \mbox{and} \quad \int_{\mathbb{R}^N}F(|u_n|)\,dx=\int_{\mathbb{R}^N}F(u_n^*)\,dx,
$$
which lead to 
$$
J_h(u_n) \geq J_h(u_n^*), \quad \forall n \in \mathbb{N}.
$$
By boundedness of $(u_n^*)$, we can assume that $u_n^* \rightharpoonup u_0$ in $H^{1}(\mathbb{R}^N)$ for some $u_0 \in H_{rad}^{1}(\mathbb{R}^N)$. Therefore, the conditions on $g$ combined with Strauss' Lemma ensure that 
$$
\lim_{n \to +\infty}\int_{\mathbb{R}^N}G_1(u_n^*)\,dx = \int_{\mathbb{R}^N}G_1(u_0)\,dx
$$
and
$$
\liminf_{n \to +\infty}\int_{\mathbb{R}^N}G_2(u_n^*)\,dx \geq \int_{\mathbb{R}^N}G_2(u_0)\,dx.
$$
Hence, 
$$
\liminf_{n \to +\infty}\int_{\mathbb{R}^N}(-G(u_n^*))\,dx=\liminf_{n \to +\infty}\int_{\mathbb{R}^N}G_2(u_n^*)\,dx-\lim_{n \to +\infty}\int_{\mathbb{R}^N}G_1(u_n^*)\,dx
$$
and so,
$$
\liminf_{n \to +\infty}\int_{\mathbb{R}^N}(-G(u_n^*))\,dx \geq \int_{\mathbb{R}^N}G_2(u_0)\,dx-\int_{\mathbb{R}^N}G_1(u_0)\,dx=\int_{\mathbb{R}^N}(-G(u_0))\,dx.
$$
Recalling that
$$
\liminf_{n \to +\infty}\int_{\mathbb{R}^N}|\nabla u_n^*|^{2}\,dx \geq \int_{\mathbb{R}^N}|\nabla u_0|^{2}\,dx,
$$
we obtain
$$
\liminf_{n \to +\infty}J_h(u_n^*)\geq \liminf_{n \to +\infty}\int_{\mathbb{R}^N}|\nabla u_n^*|^{2}\,dx + \liminf_{n \to +\infty}\int_{\mathbb{R}^N}(-G(u_n))\,dx \geq J_h(u_0),
$$
which completes the proof. 
\end{proof}

\begin{lemma} \label{L3} The conditions $(H_2)-(H_3)$ are satisfied for $\Phi, \Psi$ and $h$ given in (\ref{phi1}),(\ref{psi1}) and (\ref{h1}) respectively. 
\end{lemma}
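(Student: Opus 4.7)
The plan is to verify the two items separately. Property $(H_2)$ reduces to coercivity of $J_h$, since Lemma \ref{L2} already provides $J_h \in (\mathcal{ALS})_{H^{1}(\mathbb{R}^N)}$, while $(H_3)$ will follow from a Berestycki--Lions cutoff combined with a dilation. The choice $k > N/(N-2)$ is exactly what makes both halves of the argument work.

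For coercivity, I would decompose $G = G_1 - G_2$. The hypothesis $g_2(s)\ge m s$ together with oddness yields $G_2(s) \geq \tfrac{m}{2}s^{2}$, and the growth of $g_1$ at the origin and at infinity gives the pointwise bound $G_1(s) \leq \varepsilon s^{2} + C_\varepsilon |s|^{2^{*}}$ for every $\varepsilon > 0$. Integrating and invoking Sobolev's inequality produces
\begin{equation*}
J_h(u) \;\geq\; \frac{1}{k\,2^{k}}\|\nabla u\|_{2}^{2k} \;-\; C'_\varepsilon \|\nabla u\|_{2}^{2^{*}} \;+\; \Big(\tfrac{m}{2}-\varepsilon\Big)\|u\|_{2}^{2}.
\end{equation*}
Fix $\varepsilon < m/2$; since $k > N/(N-2)$ is equivalent to $2k > 2^{*}$, the map $a \mapsto \frac{a^{k}}{k\,2^{k}} - C'_\varepsilon a^{2^{*}/2}$ is bounded below and tends to $+\infty$ as $a \to \infty$. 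Splitting cases according to whether $\|\nabla u\|_{2}$ stays bounded along a diverging sequence, either the gradient term or the $L^{2}$ term forces $J_h(u) \to +\infty$, which gives coercivity.

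For $(H_3)$, I would borrow the classical Berestycki--Lions cutoff: starting from $\xi$ given by $(g_3)$, take $w$ equal to $\xi$ on a ball $B_R$ and descending linearly to $0$ on $B_{R+1}\setminus B_R$. The bulk contribution $G(\xi)\,|B_R|$ dominates the annulus contribution, which is $O(R^{N-1})$, so $\int G(w)\,dx > 0$ once $R$ is large. Next I dilate by setting $w_t(x) := w(x/t)$; a change of variable gives $\Phi(w_t) = \tfrac12 t^{N-2}\|\nabla w\|_{2}^{2}$ and $\Psi(w_t) = t^{N}\int G(w)\,dx$, whence
\begin{equation*}
\frac{\Psi(w_t)}{h(\Phi(w_t))} \;=\; \frac{\int_{\mathbb{R}^N} G(w)\,dx}{\bigl(\tfrac12 \|\nabla w\|_{2}^{2}\bigr)^{k}}\; t^{\,N - k(N-2)}.
\end{equation*}
Since $k > N/(N-2)$, the exponent $N - k(N-2)$ is strictly negative, and letting $t \to 0^{+}$ sends the ratio to $+\infty$; any sufficiently small $t > 0$ therefore yields $e := w_t$ with $\Psi(e)/h(\Phi(e)) > 1$.

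I expect the only subtle point is that coercivity and $(H_3)$ seem to pull the exponent $k$ in opposite directions --- coercivity demands $k$ large enough to beat the critical Sobolev exponent, while the scaling in $(H_3)$ demands that dilating $w$ inflates $\Psi/h(\Phi)$ --- but both demands collapse to the same inequality $k > N/(N-2)$ once one works in the \emph{small}-$t$ regime rather than the large-$t$ regime. Once that is seen, nothing deeper is hidden.
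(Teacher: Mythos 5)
Your argument is correct and follows essentially the same route as the paper: coercivity from the pointwise bound $G(s)\le -c\,|s|^{2}+C|s|^{2^{*}}$ plus Sobolev and the inequality $2k>2^{*}$, and $(H_3)$ from the dilation $w_t(x)=w(x/t)$ with $t\to 0^{+}$, using that $N-k(N-2)<0$. The only differences are cosmetic: you spell out the $G_1$--$G_2$ decomposition and the Berestycki--Lions cutoff explicitly, where the paper simply asserts the resulting bounds.
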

\begin{proof}\, By assumptions on $g$, 
$$
G(s) \leq -m|s|^{2}+C|s|^{2^*}, \quad \forall s \in \mathbb{R}. 
$$
Consequently, by Sobolev embedding, 
$$
\Psi(u) \leq -m|u|^{2}_{L^{2}(\mathbb{R}^N)}+C|\nabla u|^{2^*}_{L^{2}(\mathbb{R}^N)}, \quad \forall u \in H^{1}(\mathbb{R}^N).
$$
On the other hand, by definition of $h$, 
$$
h(\Phi(u))=\frac{1}{k}\left(\frac{1}{2}|\nabla u|^{2}_{L^{2}(\mathbb{R}^N)}\right)^{k}.
$$
Since $k > \frac{N}{N-2}$, a simple computation gives
$$
\lim_{\|u\| \to +\infty}J_h(u)=h(\Phi(u))-\Psi(u)=+\infty, 
$$
which establishes the coercivity of $J_h$. This together with Lemma \ref{L2} shows that $(H_2)$ is valid. Now, we are going to prove $(H_3)$. Have this in mind, firstly we recall that $(g_3)$ guarantees the existence of a function $\phi \in C_0^{\infty}(\mathbb{R}^N)$ with
$$
\int_{\mathbb{R}^N}G(\phi)\,dx>0.
$$  
For each $t>0$, the function $\phi_t(x)=\phi(x/t)$ belongs to $H^{1}(\mathbb{R}^N)$ with
$$
\Psi(\phi_t)=t^{N}\Psi(\phi) \quad \mbox{and} \quad \Phi(\phi_t)=t^{N-2}\Phi(\phi).
$$
This together with the fact that $k>\frac{N}{N-2}$ yields
$$
\lim_{t\to 0^+ }\frac{\Psi(\phi_t)}{h(\Phi(\phi_t))}=\lim_{t\to 0^+ }\frac{t^N\Psi(\phi)}{t^{k(N-2)}h(\Phi(\phi))}=+\infty.
$$
Thereby, $(H_3)$ holds with $e=\phi_t$ and $t$ small enough. 
\end{proof}

As a consequence of Lemmas \ref{L1}-\ref{L3}, we have the following result

\begin{theorem} \label{T1} Assume $(g_1)-(g_3)$. Then problem (\ref{BL}) has a positive solution. 
	
\end{theorem}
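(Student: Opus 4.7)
The plan is to invoke the abstract machinery of Theorem \ref{AT1} with the functionals $\Phi,\Psi:H^{1}(\mathbb{R}^N)\to\mathbb{R}$ given in (\ref{phi1})--(\ref{psi1}) and the auxiliary function $h(t)=t^{k}$ from (\ref{h1}) for a fixed $k>\tfrac{N}{N-2}$, and then to upgrade the nontrivial critical point produced by the theorem to a strictly positive one. All the heavy lifting for the application of the abstract theorem has already been packaged into Lemmas \ref{L1}--\ref{L3}, so the core of the proof is essentially a matter of assembling those pieces.

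First I would observe that $(AF_1)$ and $(AF_2)$ are immediate from the explicit form of $\Phi$ and $\Psi$ together with the growth control on $G$ (the integral $\int G(u)\,dx$ sends $H^{1}$-bounded sets to bounded sets, and $\Phi$ is a seminorm that vanishes only at $0$ in $H^{1}(\mathbb{R}^N)$). Lemma \ref{L1} supplies the absorption property $(AF_3)$ via the Berestycki--Lions rescaling $v(x)=u(x/\sqrt{\lambda})$, so that $(\Phi,\Psi)\in(\mathcal{AF})_{X}$ with $X=H^{1}(\mathbb{R}^N)$. Lemma \ref{L2} says precisely that $J_{h}\in(\mathcal{ALS})_{X}$, and Lemma \ref{L3} gives coercivity of $J_{h}$ (from $k>\tfrac{N}{N-2}$ and the Sobolev inequality) plus $(H_3)$ (from $(g_3)$ together with the dilation $\phi_{t}(x)=\phi(x/t)$ for small $t$). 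Hence $h\in\mathcal{C}_{\Phi,\Psi}$, and Theorem \ref{AT1} produces some $u\in H^{1}(\mathbb{R}^N)\setminus\{0\}$ with $0\in\partial J(u)$. Since $\Psi\in C^{1}(H^{1}(\mathbb{R}^N),\mathbb{R})$ under our hypotheses on $g$, Remark \ref{RE} converts this into the classical weak-solution identity $-\Delta u=g(u)$ in $\mathbb{R}^{N}$.

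The step I expect to require the most care, and which Theorem \ref{AT1} does not give for free, is \emph{positivity}. To handle it I would revisit the construction inside Lemma \ref{L2}: the minimizer $v_{0}$ of $J_{h}$ is obtained as the weak limit in $H^{1}(\mathbb{R}^N)$ of Schwarz symmetrizations $u_{n}^{\ast}$, which are radial and nonnegative; hence $v_{0}$ is itself radial and $v_{0}\geq 0$. The rescaling from Lemma \ref{L1} used in $(AF_3)$ preserves both nonnegativity and radial symmetry, so the resulting solution $u$ of (\ref{BL}) is radial and nonnegative. Elliptic regularity ($g$ has subcritical-type behaviour and $u\in H^{1}(\mathbb{R}^N)$) promotes $u$ to $C^{2}(\mathbb{R}^N)$. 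Writing $-\Delta u+m u=g(u)+mu$, assumption $(g_1)$ guarantees that the right-hand side is nonnegative on $\{u\text{ small}\}$, and the standard Hopf--strong maximum principle then forces $u>0$ everywhere, because $u\not\equiv 0$.

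In short, the only nontrivial step beyond the abstract theorem is verifying that the critical point produced inherits nonnegativity from the symmetrization in Lemma \ref{L2}; the strict positivity then follows from the classical maximum principle argument. Everything else reduces to checking the hypotheses $(AF_1)$--$(AF_3)$ and $h\in\mathcal{C}_{\Phi,\Psi}$, which have already been verified in the preceding lemmas.
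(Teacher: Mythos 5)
Your proposal is correct and follows essentially the same route as the paper: the paper's own ``proof'' of Theorem \ref{T1} is literally the one-line observation that Lemmas \ref{L1}--\ref{L3} verify the hypotheses of Theorem \ref{AT1}, which is exactly how you assemble the pieces. Your additional positivity argument (nonnegativity inherited from the Schwarz symmetrization in Lemma \ref{L2}, preserved by the rescaling of Lemma \ref{L1}, then upgraded via elliptic regularity and the strong maximum principle) is a genuine supplement that the paper omits entirely; the one small slip is that the linear lower bound $g(s)\geq -Cs$ near $0$ comes from the $\liminf$ half of $(g_1)$, so the maximum principle should be applied to $-\Delta + C$ with that constant $C$, not with $m$, which only controls the $\limsup$ from above and gives an inequality in the wrong direction.
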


\section{Application: The zero  mass case}

In this section, we will show that  Theorem \ref{AT1} can be used to show the existence of solution for the zero mass case associated with (\ref{BL}). In this case, the conditions on $g$ are the following:  
$$
g(0)=0,  \quad \limsup_{s \to 0^+}\frac{g(s)}{s^{{2^*}-1}}=0, \leqno{(g_4)} 
$$
$$
\mbox{there is} \quad \xi>0 \, \, \mbox{such that} \,\, G(\xi)>0, \leqno{(g_5)}
$$
Let $\xi_0=\inf\{\xi>0\;G(\xi)>0\}$. If $g(s)>0$ for all $s > \xi_0$, then 
$$
\limsup_{s \to +\infty}\frac{g(s)}{s^{{2^*}-1}}=0. \leqno{(g_6)}
$$ 

The approach used in the last section can be repeated with $H^{1}(\mathbb{R}^N)$ and $H_{rad}^{1}(\mathbb{R}^N)$ replaced by $D^{1,2}(\mathbb{R}^N)$ and $D^{1,2}_{rad}(\mathbb{R}^N)$ respectively. In this section $\|\,\,\,\|$ denotes the usual norm in 
$D^{1,2}(\mathbb{R}^N)$, that is,
$$
\|u\|=\left(\int_{\mathbb{R}^N}|\nabla u|^{2}\, dx \right)^{\frac{1}{2}}.
$$

Arguing as Beresticki-Lions \cite{berest}, let us consider  
$$
g(s)=g_1(s)-g_2(s), \quad s \geq 0,
$$
where
$$
g_1(s)=g(s)^{+}=\max\{s,0\} \quad \mbox{and} \quad g_2(s)=g(s)^{-}=(-g(s))^{+}. 
$$
By definition of $g_1$ and $g_2$, we have that $g_1(s),g_2(s) \geq 0$  for all $s \geq 0$. Now, we extend the functions $g_1$ and $g_2$ to whole $\mathbb{R}$ as odd functions. The conditions on $g$ together with the definition of $g_1$ ensure that
$$
g_1(0)=0 \quad \mbox{and} \quad \lim_{s \to 0}\frac{g_1(s)}{|s|^{2^*-1}}=\lim_{|s| \to +\infty}\frac{g_1(s)}{|s|^{2^*-1}}=0.
$$

The above information combined with Strauss' Lemma implies in the lemma below
\begin{lemma} Let $(u_n) \subset D^{1,2}_{rad}(\mathbb{R}^N)$ be a sequence with $u_n\rightharpoonup u_0$ in $D^{1,2}_{rad}(\mathbb{R}^N)$. Then, 
$$
\lim_{n \to +\infty}\int_{\mathbb{R}^N}G_1(u_n)\,dx=\int_{\mathbb{R}^N}G_1(u_0)\,dx
$$	
and
$$
\lim_{n \to +\infty}\int_{\mathbb{R}^N}G_2(u_n)\,dx \geq \int_{\mathbb{R}^N}G_2(u_0)\,dx,
$$
where $G_i(s)=\int_{0}^{s}g_i(t)\,dt$ for $i=1,2$.
\end{lemma}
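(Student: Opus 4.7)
The plan is to combine the Strauss radial decay estimate for $D^{1,2}_{rad}(\mathbb{R}^N)$ with the classical Berestycki--Lions--Strauss compactness lemma and the ``sublinear at both ends'' growth that $g_1$ inherits from $(g_4)$ and $(g_6)$. First I would extract a subsequence (still denoted by $u_n$) for which $u_n\to u_0$ almost everywhere; this follows from the compact embedding $D^{1,2}(B_R)\hookrightarrow L^{q}(B_R)$ for $q<2^*$ applied on a countable exhaustion of $\mathbb{R}^N$ by balls. I would also record the pointwise Strauss estimate
$$
|u(x)|\le C\|u\|\,|x|^{-(N-2)/2},\qquad |x|\ge 1,
$$
valid for any radial $u\in D^{1,2}(\mathbb{R}^N)$, which turns the boundedness of $(u_n)$ into uniform decay outside large balls.

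For the first assertion, the decomposition $\mathbb{R}^N = B_R\cup(\mathbb{R}^N\setminus B_R)$ handles the two competing scales. On the tail $|x|\ge R$, the Strauss pointwise bound makes $|u_n(x)|$ uniformly small once $R$ is large, and the growth $G_1(s)=o(|s|^{2^*})$ as $s\to 0$ (the integrated form of $(g_4)$) yields
$$
\int_{|x|\ge R}|G_1(u_n)|\,dx \le \varepsilon \int_{|x|\ge R}|u_n|^{2^*}\,dx \le C\varepsilon,
$$
uniformly in $n$ by the Sobolev inequality, and the same holds with $u_0$ in place of $u_n$. On the bounded set $B_R$, the a.e.\ convergence together with $G_1(s)/|s|^{2^*}\to 0$ at both $0$ and $\infty$ is exactly the hypothesis of the standard Strauss compactness lemma (via a Vitali-type argument controlled by the uniform $L^{2^*}$-bound), so $\int_{B_R}G_1(u_n)\to\int_{B_R}G_1(u_0)$. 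Sending first $n\to\infty$ and then $R\to\infty$ with $\varepsilon\to 0$ yields the convergence along the chosen subsequence, and a subsequence-of-subsequence argument promotes this to convergence of the whole sequence.

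For the second inequality, which should be read as a $\liminf$-inequality (the stated ``$\lim$'' being a typo, consistent with the corresponding step in Lemma \ref{L2}), the construction of $g_2$ makes $g_2(s)\ge 0$ for $s\ge 0$ and extends to an odd function, so the primitive $G_2$ is even and nonnegative on $\mathbb{R}$. The a.e.\ convergence $u_n\to u_0$ then permits Fatou's lemma to conclude $\liminf_n\int G_2(u_n)\,dx \ge \int G_2(u_0)\,dx$. The main obstacle is the first assertion: because we are working in $D^{1,2}$ rather than $H^{1}$, there is no $L^{2}$-control of $u_n$ and the tail cannot be tamed by mass-based estimates, so the radial symmetry and Strauss decay do essential work in coupling the small-$s$ sublinearity of $g_1$ with the uniform $L^{2^*}$-bound.
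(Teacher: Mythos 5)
Your argument is correct and is precisely the route the paper intends: the paper gives no written proof beyond invoking Strauss' Lemma together with the stated vanishing of $g_1(s)/|s|^{2^*-1}$ at $0$ and at infinity, and your combination of the radial decay estimate in $D^{1,2}_{rad}(\mathbb{R}^N)$, the $L^{2^*}$-controlled compactness (Vitali) argument for $G_1$, and Fatou's lemma for the nonnegative $G_2$ supplies exactly those missing details. Your reading of the second conclusion as a $\liminf$ inequality is also the intended one, consistent with Lemma \ref{L2}.
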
 

In the present section, the functionals $\Phi, \Psi,J_h:D^{1,2}(\mathbb{R}^N) \to \mathbb{R}$ are as in Section 3. The reader is invited to observe that Lemmas \ref{L1} and \ref{L2} remain valid for the zero mass case, if we replace $H^{1}(\mathbb{R}^N)$ by $D^{1,2}(\mathbb{R}^N)$. Related to the Lemma \ref{L3} a little adjust must be done to prove that $J_h$ is coercive.

\begin{lemma} \label{L4} Assume that $(g_4)-(g_6)$ hold. Then  $J_h$ given in (\ref{jh}) is coercive. 
\end{lemma}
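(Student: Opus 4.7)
The plan is to reduce coercivity of $J_h$ to a comparison between $h(\Phi(u))$, which behaves like $\|u\|^{2k}$, and $\Psi(u)$, which I will bound above by a multiple of $\|u\|^{2^{*}}$; the choice $k>N/(N-2)=2^{*}/2$ then yields $2k>2^{*}$ and the conclusion is immediate.

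The first and main step is to establish the pointwise estimate
$$G_{1}(s)\leq C\,|s|^{2^{*}}\quad\text{for all }s\in\mathbb{R},$$
where $G_{1}(s)=\int_{0}^{s}g_{1}(t)\,dt$ with $g_{1}$ the odd extension of $g^{+}$. The limit $\limsup_{s\to 0^{+}}g(s)/s^{2^{*}-1}=0$ from $(g_{4})$ forces $g_{1}(s)\leq \varepsilon s^{2^{*}-1}$ for $s\in(0,\delta]$, while $(g_{6})$, under the standing structural hypothesis that $g(s)>0$ for every $s>\xi_{0}$, gives $g_{1}(s)=g(s)\leq \varepsilon s^{2^{*}-1}$ for $s\geq M$, for some $0<\delta<M<\infty$. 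On the remaining compact interval $[\delta,M]$ the continuous function $g_{1}$ is bounded by some constant $L$, and one rewrites $L=L\delta^{-(2^{*}-1)}\delta^{2^{*}-1}\leq L\delta^{-(2^{*}-1)}s^{2^{*}-1}$ to obtain a single bound $g_{1}(s)\leq \widetilde{C}s^{2^{*}-1}$ valid for all $s\geq 0$. Integration and the evenness of $G_{1}$ then yield the desired global estimate.

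The second step is routine. Since $g_{2}\geq 0$ on $[0,\infty)$ and is odd, $G_{2}\geq 0$ everywhere, so $G(s)\leq G_{1}(s)\leq C|s|^{2^{*}}$. The Sobolev embedding $D^{1,2}(\mathbb{R}^{N})\hookrightarrow L^{2^{*}}(\mathbb{R}^{N})$ then furnishes a constant $C_{*}>0$ such that
$$\Psi(u)=\int_{\mathbb{R}^{N}}G(u)\,dx\leq C\int_{\mathbb{R}^{N}}|u|^{2^{*}}\,dx\leq C_{*}\|u\|^{2^{*}},\qquad\forall u\in D^{1,2}(\mathbb{R}^{N}).$$

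Combining with the identity $h(\Phi(u))=\frac{1}{k\,2^{k}}\|u\|^{2k}$, I obtain
$$J_{h}(u)\geq \frac{1}{k\,2^{k}}\|u\|^{2k}-C_{*}\|u\|^{2^{*}}.$$
Because $k>N/(N-2)$, i.e.\ $2k>2^{*}$, the right-hand side tends to $+\infty$ as $\|u\|\to+\infty$, which proves coercivity. The only genuinely delicate point is the global power estimate on $G_{1}$; once $(g_{4})$ handles small values, $(g_{6})$ handles large values, and continuity handles the intermediate range, everything else is a direct Sobolev computation with no extra subtlety compared to the positive mass case of Lemma~\ref{L3}.
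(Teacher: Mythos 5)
Your proposal is correct and follows essentially the same route as the paper: the paper likewise derives a pointwise bound $|G(s)|\leq c_1|s|^{2^*}$ from the hypotheses on $g$, applies the Sobolev embedding to get $\Psi(u)\leq C\|u\|^{2^*}$, and concludes coercivity from $2k>2^*$. You merely supply the small/large/intermediate-range details of the pointwise estimate that the paper leaves implicit.
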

\begin{proof}
From  conditions on $g$, there is $c_1>0$ such that
$$
|G(s)|\leq c_1|s|^{2^*}, \quad \forall s \in \mathbb{R}.
$$
Thereby, by Sobolev embedding, 
$$
\Psi(u) \leq C |\nabla u|^{2^*}_{L^{2}(\mathbb{R}^N)},
$$
and so, 
$$
J_h(u)\geq \frac{1}{k}\left(\frac{1}{2}|\nabla u|^{2}_{L^{2}(\mathbb{R}^N)}\right)^{k}-C |\nabla u|^{2^*}_{L^{2}(\mathbb{R}^N)}.
$$
This shows that $J_h$ is coercive.  
\end{proof}

The previous results ensure that theorem below is true 

\begin{theorem} \label{T2} Assume that $(g_4)-(g_6)$ hold. Then (\ref{BL}) has a nontrivial solution. 
\end{theorem}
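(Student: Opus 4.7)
The plan is to apply Theorem \ref{AT1} on $X = D^{1,2}(\mathbb{R}^N)$ with
$$
\Phi(u) = \frac{1}{2}\int_{\mathbb{R}^N} |\nabla u|^2 \, dx, \qquad \Psi(u) = \int_{\mathbb{R}^N} G(u)\, dx,
$$
and compatibility function $h(t) = t^k$ for a fixed $k > N/(N-2)$, mirroring the setup of Section 3 but in the homogeneous space $D^{1,2}(\mathbb{R}^N)$ where the $L^2$ control near zero is not available.

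First I would verify $(\Phi,\Psi) \in (\mathcal{AF})_X$. The growth bound $|G(s)| \leq c_1 |s|^{2^*}$ coming from $(g_4)$ and $(g_6)$, combined with the Sobolev embedding $D^{1,2}(\mathbb{R}^N) \hookrightarrow L^{2^*}(\mathbb{R}^N)$, yields $(AF_1)$; property $(AF_2)$ is immediate from $\Phi(u) = \tfrac12 \|u\|^2$; and $(AF_3)$ follows from the rescaling $v(x) = u(x/\sqrt{\lambda})$ exactly as in Lemma \ref{L1}, as pointed out at the start of the section.

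Next I would check $h \in \mathcal{C}_{\Phi,\Psi}$. Condition $(H_1)$ is trivial. For $(H_2)$, coercivity of $J_h$ is precisely Lemma \ref{L4}, while the almost lower semicontinuity is obtained as in Lemma \ref{L2}: Schwarz symmetrize a bounded minimizing sequence $(u_n)$ to get a radially decreasing sequence $(u_n^*)$ with $J_h(u_n^*) \leq J_h(u_n)$, extract a weakly convergent subsequence $u_n^* \rightharpoonup u_0$ in $D^{1,2}_{rad}(\mathbb{R}^N)$, and conclude via the Strauss-type lemma stated at the beginning of Section 4 (honest limit for the $G_1$-term, lower semicontinuity for the $G_2$-term) combined with the weak lower semicontinuity of the Dirichlet norm. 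For $(H_3)$, from $(g_5)$ one produces $\phi \in C_0^{\infty}(\mathbb{R}^N)$ with $\int G(\phi)\,dx > 0$, then rescales $\phi_t(x) = \phi(x/t)$ and uses $\Phi(\phi_t) = t^{N-2}\Phi(\phi)$, $\Psi(\phi_t) = t^N \Psi(\phi)$: since $k(N-2) > N$, letting $t \to 0^+$ sends $\Psi(\phi_t)/h(\Phi(\phi_t))$ to $+\infty$, so $e = \phi_t$ works for $t$ sufficiently small. Theorem \ref{AT1} then produces a nontrivial critical point $u \in D^{1,2}(\mathbb{R}^N)$ of $J = \Phi - \Psi$, which by Remark \ref{RE} is a weak solution of (\ref{BL}).

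The delicate point is the almost lower semicontinuity in the purely critical setting of $D^{1,2}$: the embedding $D^{1,2}_{rad}(\mathbb{R}^N) \hookrightarrow L^{2^*}(\mathbb{R}^N)$ is not compact, so one cannot pass to the limit by standard Sobolev compactness. The decisive ingredient is the double vanishing $g_1(s)/|s|^{2^*-1} \to 0$ both as $s \to 0$ and as $|s| \to \infty$, which, together with the radial Strauss decay, recovers a genuine limit for $\int G_1(u_n^*)\,dx$ and a Fatou-type inequality for $\int G_2(u_n^*)\,dx$. Once this compactness is secured, the entire scheme of Section 3 transplants verbatim to the zero-mass framework, and the conclusion follows from a direct application of Theorem \ref{AT1}.
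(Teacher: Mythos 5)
Your proposal is correct and follows essentially the same route as the paper: replace $H^{1}(\mathbb{R}^N)$ by $D^{1,2}(\mathbb{R}^N)$, keep $\Phi$, $\Psi$ and $h(t)=t^{k}$ with $k>\frac{N}{N-2}$ from Section 3, reprove coercivity via $|G(s)|\leq c_1|s|^{2^*}$ and Sobolev embedding (the paper's Lemma \ref{L4}), obtain almost lower semicontinuity through Schwarz symmetrization and the Strauss-type lemma for $D^{1,2}_{rad}(\mathbb{R}^N)$ built on the double vanishing of $g_1(s)/|s|^{2^*-1}$ at $0$ and at infinity, and conclude by Theorem \ref{AT1}. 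Your write-up is in fact somewhat more explicit than the paper's, which only indicates that Lemmas \ref{L1}--\ref{L3} carry over with the stated modifications.
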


\subsection{A global minimization trick: Multiple solutions for a class of zero mass problems} \mbox{}\\
\mbox{}\,\,\,\,\,\, In this subsection we will show that the global minimization trick can be used to establish the existence of many solutions for a class of zero mass problems.  More precisely we will consider the problem (\ref{BL}), by supposing that $g:\mathbb{R} \to [0,+\infty)$ is a continuous function that satisfies the following conditions: \\

\noindent  $(g_7)$ \,\, There are $q >2^*$ and $c_1,c_2,\delta >0$ such that  
$$
c_1s^{q}\leq g(s) \leq c_2s^{q}, \quad \forall s \in [0,\delta].
$$
\noindent  $(g_8)$ $\displaystyle \lim_{s \to +\infty}\frac{g(s)}{s^{2^{*}-1}}=0.$

\vspace{0.5 cm}

By Theorem \ref{T2}, problem (\ref{BL}) has at least a positive solution. Now, our main goal is to prove that (\ref{BL}) has infinite many nontrivial solutions. To this end, we will employ the genus theory. Here as in the previous section, we extend $g$ to whole $\mathbb{R}$ as an odd function.

Before showing a version of Theorem \ref{AT1} involving multiplicity of solution, we will introduce more some notations. In what follows, we will say that a $C^{1}$-function $h:[0,+\infty) \to [0,+\infty)$ is {\bf  strongly compatible} with the $C^1$-functionals $\Phi,\Psi:X \to \mathbb{R}$, denoted by $h \in (\mathcal{SC})_{\Phi,\Psi}$, if $h \in \mathcal{C}_{\Phi,\Psi}$, the functional $J_h$ satisfies the $(PS)$ condition and the property below is also valid:  \\

\noindent $(AF_3)'$ \,\, For any sequence  $(u_j) \subset X \setminus \{0\}$ with $u_s \not= u_l$ for $s \not= l$ satisfying  
$$
\Phi'(u_i)-\lambda_i \Psi'(u_i)=0, \quad \forall i \in \mathbb{N}, 
$$
with $\lambda_i=(h'(\Phi(u_i)))^{-1}$, it is possible to find $v_i=v(\lambda_i,u_i) \in X \setminus \{0\}$ with $v_s \not= v_l$ for $s \not= l$ such that  
$$
\Phi'(v_i)-\Psi'(v_i)=0, \quad  \forall i \in \mathbb{N}.
$$

\begin{theorem} \label{AT2} Let $X$ be a reflexive Banach space and consider three functionals $J,\Phi,\Psi:X \to \mathbb{R}$ such that 
	$$
	J(u)=\Phi(u)-\Psi(u), \quad \forall u \in X,
	$$	
	with $\Phi \in (\mathcal{AF})_X$, $(\Phi,\Psi) \in (\mathcal{AP})$ and $\Phi,\Psi$ being even $C^1$-functionals. Moreover, assume that there is $h \in (\mathcal{SC})_{\Phi,\Psi}$ and that for each $j \in \mathbb{N}$ there is a sphere $S_r^{j-1} \subset E_j$ of radius $r_j=r(j)>0$ in a $j$-dimension space $E_j$  such that 
	$$
	\sup_{u \in S_{r_j}^{j-1}}J_h(u)<0.
	$$
	Then, functional $J$ has infinite many nontrivial solutions. 
\end{theorem}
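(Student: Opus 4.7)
My strategy is to apply a symmetric minimax / genus scheme directly to the auxiliary even functional $J_h=h(\Phi)-\Psi$, whose $C^1$ smoothness, coercivity and $(PS)$ condition are all guaranteed by the assumption $h\in(\mathcal{SC})_{\Phi,\Psi}$, and then to lift the resulting critical points of $J_h$ to critical points of $J=\Phi-\Psi$ through the transfer property $(AF_3)'$.

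Concretely, first I would denote by $\Sigma$ the family of closed symmetric subsets of $X\setminus\{0\}$, let $\gamma$ denote the Krasnoselskii genus and set, for each $j\in\mathbb{N}$,
$$
\Sigma_j=\{A\in\Sigma:\gamma(A)\geq j\},\qquad c_j=\inf_{A\in\Sigma_j}\sup_{u\in A}J_h(u).
$$
Since the sphere $S_{r_j}^{j-1}\subset E_j$ has genus exactly $j$, it lies in $\Sigma_j$, so the hypothesis $\sup_{u\in S_{r_j}^{j-1}}J_h(u)<0$ yields $c_j<0$ for every $j$; coercivity of $J_h$ together with $(AF_1)$ gives $c_j>-\infty$. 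Hence $-\infty<c_1\leq c_2\leq\cdots<0$.

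The second step is the classical Ljusternik--Schnirelman / Clark theorem for even $C^1$ functionals which are bounded below and satisfy $(PS)$: each $c_j$ is a critical value of $J_h$, and whenever $c_j=c_{j+1}=\cdots=c_{j+p}$ the critical set of $J_h$ at that level has genus at least $p+1$ and is therefore infinite. In either case one obtains an infinite sequence $(u_i)\subset X$ of pairwise distinct critical points of $J_h$ with $J_h(u_i)=c_i<0$, which in particular forces $u_i\neq 0$. Each such $u_i$ satisfies $h'(\Phi(u_i))\Phi'(u_i)-\Psi'(u_i)=0$, i.e.\ $\Phi'(u_i)-\lambda_i\Psi'(u_i)=0$ with $\lambda_i=1/h'(\Phi(u_i))>0$. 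Hypothesis $(AF_3)'$ then produces a sequence $(v_i)\subset X\setminus\{0\}$ of pairwise distinct elements with $\Phi'(v_i)-\Psi'(v_i)=0$, so $(v_i)$ is an infinite family of nontrivial critical points of $J$, which is the desired conclusion.

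The main technical hurdle, in my view, is not the minimax step (which is routine once $(PS)$ and coercivity are in hand) but rather the bookkeeping in the lifting: $(AF_3)'$ has been set up precisely so that the transformation $u_i\mapsto v_i=v(\lambda_i,u_i)$ preserves distinctness, but verifying this in applications is nontrivial. For instance, for the scaling $v(x)=u(x/\sqrt{\lambda})$ of Lemma \ref{L1}, one needs to argue that distinct pairs $(u_i,\lambda_i)$ give rise to distinct rescaled functions $v_i$, which typically is checked through an invariant such as the $L^\infty$ norm, the Nehari or Pohozaev identities, or the associated energy levels. Once this is accepted at the abstract level, the proof itself reduces to the two-line conjunction of symmetric Ljusternik--Schnirelman with $(AF_3)'$.
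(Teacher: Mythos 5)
Your proposal is correct and follows essentially the same route as the paper: the paper simply invokes Heinz \cite[Proposition 2.2]{H} (a Clark/Ljusternik--Schnirelman type result for even, bounded below, $(PS)$ functionals) to obtain infinitely many distinct critical points of $J_h$ with negative levels, whereas you unpack that citation into the explicit genus minimax construction, and both arguments then conclude identically via $(AF_3)'$. The only difference is that you prove the symmetric minimax step while the paper cites it.
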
 
\begin{proof} \, By using the same arguments explored in the proof of Theorem \ref{AT1}, we know that $J_h$ is bounded from below. Since $J_h$ is even and satisfies the $(PS)$ condition , by Heinz \cite[Proposition 2.2]{H},  there exists a sequence of critical points $(u_j) \subset X$ for $J_h$ with
$$
c_j=J_h(u_j)<0 \quad \mbox{and} \quad c_j \to 0 \quad \mbox{as} \quad j \to +\infty. 
$$
Therefore,
$$
h'(\Phi(u_j))\Phi'(u_j)- \Psi'(u_j)=0, \quad \forall j \in \mathbb{N},
$$
and without lost of generality, we can assume that $u_j \not= u_l$ for $j \not= l$. Setting $\lambda_j=\frac{1}{h'(\Phi(u_j)}$, we get
$$
\Phi'(u_j)- \lambda_j\Psi'(u_j)=0, \quad  \forall j \in \mathbb{N}.
$$
Now, owning $(AF_3)'$, there is sequence $(w_j)$ that satisfies 
$$
\Phi'(w_j)- \Psi'(w_j)=0, \quad \forall j \in \mathbb{N},
$$
with $w_s \not= w_l$ for $s \not= l$, and the theorem follows.  
\end{proof}

In what follows, we are working with the functional $J_h:D^{1,2}(\mathbb{R}^N) \to \mathbb{R}$ defined by
$$
J_h(u)=\frac{1}{k}\left(\frac{1}{2}\int_{\mathbb{R}^N}|\nabla u|^{2}\,dx \right)^{k}-\int_{\mathbb{R}^N}F(u)\,dx,
$$
restricts to $D_{rad}^{1,2}(\mathbb{R}^N)$, because in this space we can use the Strauss Lemma to conclude that $J_h$ verifies the $(PS)$ condition. This choose is justified by the fact that it is easy to check that if $(u_n) \subset D_{rad}^{1,2}(\mathbb{R}^N)$ and $u_n \rightharpoonup u$ in $D_{rad}^{1,2}(\mathbb{R}^N)$, then
$$
\int_{\mathbb{R}^N}f(u_n)u_n\,dx \to \int_{\mathbb{R}^N}f(u)u\,dx,
$$
which is the key point to show the $(PS)$ condition. Moreover, it is very important point out that by Palais' principle of symmetric criticality \cite{Palais}, all of the critical points of $J_h$ in $D_{rad}^{1,2}(\mathbb{R}^N)$ are critical points of $J_h$ in $D^{1,2}(\mathbb{R}^N)$.

Since $J_h$ is even and bounded from below, in order to prove the existence of infinitely many solutions  for $J_h$, we need of the two lemmas below

\begin{lemma} \label{R1}
For each $j \in \mathbb{Z}$ there are a $j$-dimension space $E_j$ and a sphere $S_r^{j-1} \subset E_j$ of radius $r_j=r(j)>0$ such that 
$$
\sup_{u \in S_{r_j}^{j-1}}J_h(u)<0.
$$
\end{lemma}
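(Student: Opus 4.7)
The plan is to exhibit, for each $j$, an explicit $j$-dimensional subspace on which $J_h$ is negative on a small sphere, by exploiting the fact that hypothesis $(g_7)$ forces $G$ to grow at a rate strictly faster than $2^{*}$ near the origin.

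First I would fix any $j$-dimensional subspace $E_j \subset C^{\infty}_{0,rad}(\mathbb{R}^N)$ of radially symmetric smooth functions compactly supported in some fixed ball. Since $E_j$ is finite-dimensional, all norms on $E_j$ are equivalent; in particular there exist constants $A_j,B_j>0$ such that
$$
\|u\|_{L^{\infty}(\mathbb{R}^N)} \leq A_j \|u\| \quad \text{and} \quad \|u\|_{L^{q+1}(\mathbb{R}^N)}^{q+1} \geq B_j \|u\|^{q+1}, \quad \forall u \in E_j,
$$
where $\|\cdot\|$ denotes the $D^{1,2}$-norm. Next, I would observe that $(g_7)$, together with the odd extension of $g$, yields
$$
G(s) \geq \frac{c_1}{q+1}\,|s|^{q+1}, \quad \text{whenever } |s| \leq \delta,
$$
by direct integration of the lower bound on $g$.

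Now pick $r_j>0$ small enough that $A_j r_j \leq \delta$. Then for any $u \in E_j$ with $\|u\|=r_j$ one has $\|u\|_{L^{\infty}} \leq \delta$, so the pointwise bound above applies almost everywhere and gives
$$
\int_{\mathbb{R}^N} G(u)\,dx \geq \frac{c_1}{q+1}\|u\|_{L^{q+1}}^{q+1} \geq \frac{c_1 B_j}{q+1}\,r_j^{q+1}.
$$
Consequently, on the sphere $S_{r_j}^{j-1} = \{u \in E_j : \|u\|=r_j\}$,
$$
J_h(u) \leq \frac{1}{k\,2^{k}}\,r_j^{2k} - \frac{c_1 B_j}{q+1}\,r_j^{q+1}.
$$

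The final step is to shrink $r_j$ further so that the negative term dominates; this works precisely when $2k > q+1$. Since the exponent $k$ in the definition of $h$ is at our disposal, subject only to the coercivity constraint $k>N/(N-2)$ inherited from Section~3, one simply fixes $k>\max\{N/(N-2),(q+1)/2\}$ once and for all at the start of this subsection. The main (and essentially only) obstacle is ensuring this compatibility between the two constraints on $k$; once it is in place, for every $j\in\mathbb{N}$ the sphere $S_{r_j}^{j-1}$ of sufficiently small radius gives $\sup_{u\in S_{r_j}^{j-1}} J_h(u)<0$, proving the lemma.
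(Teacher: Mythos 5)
Your proof is correct and follows essentially the same route as the paper: fix a finite-dimensional subspace, use equivalence of norms to force $|u|_{\infty}\leq\delta$ on a small sphere and to bound $\int_{\mathbb{R}^N}G(u)\,dx$ from below via $(g_7)$, then compare the exponents $2k$ and $q+1$ after shrinking the radius. You are in fact slightly more careful than the paper, which integrates $(g_7)$ to the exponent $q$ rather than $q+1$ and states the constraint as $k>q/2$; your explicit reconciliation of $k>(q+1)/2$ with the coercivity requirement $k>N/(N-2)$ is a welcome clarification rather than a deviation.
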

\begin{proof}
To begin with, let us fix an orthogonal set of continuous functions $Y=\{\phi_1,...,\phi_j\} \subset D_{rad}^{1,2}(\mathbb{R}^N) $ and $E_j=span(Y) \subset D_{rad}^{1,2}(\mathbb{R}^N)$. Note that if $u \in S_r^{j-1} \subset E_j$, then 
$$
u=a_1\phi_1+...+a_j\phi_j
$$
and 
$$
\|u\|^2=\sum_{i=1}^{j}a_i^{2}=r^2.
$$  
For $r$ small enough, we know that $|u|_{\infty}$ is small, because $\|\,\,\,\|$ and $ |\,\,\, |_\infty$ are equivalent on $E_j$. Therefore, decreasing $r$ if necessary, we can assume that $|u(x)| \leq \delta$ for all $x \in \mathbb{R}^N$ and $u \in S_{r_j}^{j-1}$. Thereby, by $(g_7)$,  
$$
J_h(u)\leq \frac{r^{2k}}{2k}-c_1\int_{\mathbb{R}^N}|u|^{q}\,dx.
$$
On $E_j$, the norms $\|\,\,\,\|$ and $|\,\,\,\,|_{q}$ are also equivalent, then there is $c_3>0$ such that
$$
J_h(u)\leq \frac{r^{2k}}{2k}-c_3\|u\|^{q}=\frac{r^{2k}}{2k}-c_3r^{q}.
$$ 
Now, setting $k>q/2$, there exists $r>0$ small enough such that  
$$
J_h(u)<0, \quad \forall u \in S_r^{j-1},
$$ 
and this is precisely the assertion of the lemma. 
\end{proof}

\begin{lemma} Functional $J$ satisfies $(AF_3)'$.
	
\end{lemma}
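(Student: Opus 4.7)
The plan is to reuse the rescaling from Lemma \ref{L1}, now in the zero mass setting where $H^{1}(\mathbb{R}^N)$ is replaced by $D^{1,2}(\mathbb{R}^N)$, and then to exploit the constraint $\lambda=1/h'(\Phi(u))$ to show that this rescaling is injective on the set of solutions of $\Phi'(u)=\lambda\Psi'(u)$, so that distinct $u_i$'s are automatically sent to distinct $v_i$'s.

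First, given the sequence $(u_i)$, for each $i$ I would set $v_i(x):=u_i(x/\sqrt{\lambda_i})$. The identity $\Phi'(u_i)=\lambda_i\Psi'(u_i)$ is the weak form of $-\Delta u_i=\lambda_i g(u_i)$, so the same change of variables as in the proof of Lemma \ref{L1} gives $v_i\in D^{1,2}(\mathbb{R}^N)\setminus\{0\}$ and $-\Delta v_i=g(v_i)$, i.e.\ $\Phi'(v_i)=\Psi'(v_i)$. The construction of the $v_i$'s is therefore routine once Lemma \ref{L1} is transported to $D^{1,2}(\mathbb{R}^N)$.

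The main obstacle is to ensure the $v_i$'s are pairwise distinct. I would argue by contradiction: assume $v_s=v_l=:v$ for some $s\neq l$. Then $u_j(y)=v(\sqrt{\lambda_j}\,y)$ for $j\in\{s,l\}$, and a standard change of variables yields
$$
\Phi(u_j)=\lambda_j^{(2-N)/2}\,\Phi(v), \qquad j\in\{s,l\}.
$$
Substituting into the normalization $\lambda_j=1/h'(\Phi(u_j))$ and using that $h'(t)$ is a positive constant multiple of $t^{k-1}$, one obtains
$$
\lambda_j^{\,1-(N-2)(k-1)/2}=C\,\Phi(v)^{-(k-1)},
$$
with a fixed constant $C>0$ independent of $j$. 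The choice $k>q/2$ made just before the present lemma, combined with $q>2^{*}=2N/(N-2)$, gives $k>N/(N-2)$, hence $(N-2)(k-1)/2>1$. Therefore the exponent of $\lambda_j$ on the left is a fixed nonzero (negative) number, so the displayed equation determines $\lambda_j$ uniquely in terms of $\Phi(v)$. This forces $\lambda_s=\lambda_l$, and hence $u_s=u_l$, contradicting the hypothesis $u_s\neq u_l$.

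Thus the rescaling $u_i\mapsto v_i$ sends the pairwise-distinct sequence $(u_i)$ to a pairwise-distinct sequence of nontrivial critical points of $J=\Phi-\Psi$, which is exactly what $(AF_3)'$ requires. The single delicate point in the argument is the injectivity of this rescaling correspondence, and the strict inequality $k>N/(N-2)$ coming from $(g_7)$ is precisely what makes it hold.
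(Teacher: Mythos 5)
Your proposal is correct and follows essentially the same route as the paper: rescale $v_i(x)=u_i(x/\sqrt{\lambda_i})$, and show injectivity by combining $\Phi(u_j)=\lambda_j^{(2-N)/2}\Phi(v)$ with $\lambda_j=1/h'(\Phi(u_j))$ to pin down $\lambda_j$ uniquely, using that the exponent $1-(N-2)(k-1)/2$ is nonzero when $k>N/(N-2)$. The paper organizes the same computation by first disposing of the case $\lambda_j=\lambda_i$ and then deriving a contradiction when $\lambda_j\neq\lambda_i$, but the substance is identical.
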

\begin{proof}
Let $\lambda_j=(h'(\Phi(u_j)))^{-1}>0$ and $(u_j) \subset D^{1,2}(\mathbb{R}^N) \setminus  \{0\}$ satisfying
$$
\int_{\mathbb{R}^N}\nabla u_j \nabla w\,dx-\lambda_j \int_{\mathbb{R}^N}g(u_j)w\,dx=0, \quad \forall w \in H^{1}(\mathbb{R}^N),
$$
with $u_j \not= u_i$ for $j \not= i$. Then $v_j(x)=u_j(x/\sqrt{\lambda_j})$ is a nontrivial solution of    
$$
\int_{\mathbb{R}^N}\nabla v_j \nabla w\,dx-\int_{\mathbb{R}^N}g(v_j)w\,dx=0, \quad \forall w \in H^{1}(\mathbb{R}^N) \quad \mbox{and} \quad \forall j \in \mathbb{N}.
$$
\begin{claim} 
$v_j \not= v_i$ for $j \not= i$.	
\end{claim}	
The claim is immediate if $\lambda_j=\lambda_i$. Thus, it is enough to show that 
\begin{equation} \label{INE1}
v_j \not= v_i \quad \mbox{for} \quad j \not= i \quad \mbox{and} \quad \lambda_j \not= \lambda_i.
\end{equation}
Note that the equality $v_j=v_i$ yields  in the following one
$$
\left( \int_{\mathbb{R}^N}|\nabla u_j|^{2}\,dx \right)^{\frac{(k-1)(2-N)}{2}}\int_{\mathbb{R}^N}|\nabla u_j|^{2}\,dx=\left( \int_{\mathbb{R}^N}|\nabla u_i|^{2}\,dx \right)^{\frac{(k-1)(2-N)}{2}}\int_{\mathbb{R}^N}|\nabla u_i|^{2}\,dx,
$$
that leads to $\lambda_j=\lambda_i$, which is absurd. This proves the lemma.
\end{proof}

As an immediate consequence of the above analysis we have the following result

\begin{theorem} Assume $(g_7)-(g_8)$. Then (\ref{BL}) has infinitely many nontrivial solutions.	
\end{theorem}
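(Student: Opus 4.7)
The plan is to apply Theorem \ref{AT2} on $X = D^{1,2}_{rad}(\mathbb{R}^N)$ with $\Phi(u) = \tfrac{1}{2}\int|\nabla u|^2\,dx$, $\Psi(u) = \int G(u)\,dx$, and $h(t) = t^k$ for an exponent $k$ chosen large enough to meet both requirements simultaneously: $k > N/(N-2)$ so the coercivity of $J_h$ established for the zero-mass case in Lemma \ref{L4} applies (the growth $(g_8)$ yields $|G(s)|\le C|s|^{2^*}$), and $k > q/2$ so the small-sphere estimate of Lemma \ref{R1} holds. Since $g$ has been extended as an odd function, both $\Phi$ and $\Psi$ are even $C^1$-functionals, and $(\Phi,\Psi)\in(\mathcal{AF})_X$ follows from the arguments already given in Sections 3 and 4.

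Next I would verify $h\in (\mathcal{SC})_{\Phi,\Psi}$. The inclusion $h\in\mathcal{C}_{\Phi,\Psi}$ is immediate from earlier work: $(H_1)$ is trivial, the coercivity half of $(H_2)$ is Lemma \ref{L4}, the almost-lower-semicontinuity half reproduces Lemma \ref{L2} in $D^{1,2}_{rad}(\mathbb{R}^N)$ via Schwarz symmetrization together with the radial Strauss lemma stated in Section 4, and $(H_3)$ comes from the familiar scaling $\phi_t(x)=\phi(x/t)$ with $t$ small, using that $(g_7)$ forces $G(s)>0$ on a neighbourhood of some $\xi>0$ so that $\Psi(\phi)>0$ for a suitable $\phi\in C_0^\infty(\mathbb{R}^N)$. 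The property $(AF_3)'$ is precisely the content of the second lemma just above the statement: distinct critical points $u_j$ of $\Phi'-\lambda_j\Psi'$ produce distinct rescalings $v_j(x)=u_j(x/\sqrt{\lambda_j})$, for otherwise the exponent identity forces $\lambda_j=\lambda_i$, contradicting the hypothesis $u_j\neq u_i$.

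It remains to verify the $(PS)$ condition for $J_h$ on $D^{1,2}_{rad}(\mathbb{R}^N)$. Given a $(PS)_c$ sequence $(u_n)$, the coercivity established above yields boundedness and thus weak convergence, after a subsequence, to some $u\in D^{1,2}_{rad}$. The radial Strauss lemma together with the double growth condition $(g_8)$ at infinity and $(g_7)$ near zero (which implies $|g(s)|\leq C\min\{|s|^q,|s|^{2^*-1}\}$) yields $\int g(u_n)\varphi\,dx\to\int g(u)\varphi\,dx$ for any $\varphi\in D^{1,2}_{rad}$, so that $\Psi'$ is compact on radial sequences. Combining this with $J_h'(u_n)\to 0$ and testing against $u_n-u$ gives
\[
\bigl(\tfrac{1}{2}\|\nabla u_n\|_2^2\bigr)^{k-1}\int\nabla u_n\!\cdot\!\nabla(u_n-u)\,dx\to 0,
\]
and since $\|\nabla u_n\|_2^2$ remains bounded and bounded away from giving a degenerate weight (the limit being $0$ just forces $u_n\to 0$ in $D^{1,2}_{rad}$, which is fine), we obtain strong convergence $u_n\to u$. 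With all the hypotheses of Theorem \ref{AT2} verified and the sphere condition supplied by Lemma \ref{R1}, we extract infinitely many distinct nontrivial critical points of $J=\Phi-\Psi$ on $D^{1,2}_{rad}$; the principle of symmetric criticality of Palais then lifts them to critical points of $J$ on the full space $D^{1,2}(\mathbb{R}^N)$, i.e.\ weak solutions of (\ref{BL}).

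The main obstacle I expect is the verification of $(PS)$ in the zero-mass setting: the ambient space $D^{1,2}(\mathbb{R}^N)$ does not enjoy the usual compact Sobolev embeddings, so one must rely crucially on working in $D^{1,2}_{rad}$ and on the two-sided subcritical control of $g$ coming from $(g_7)$ and $(g_8)$ to pass to the limit in $\Psi'(u_n)$. The higher-order structure introduced by the factor $\bigl(\tfrac{1}{2}\|\nabla u\|_2^2\bigr)^{k-1}$ in $J_h'$ requires separating the cases $\|\nabla u_n\|_2\to 0$ (then $u_n\to 0$ strongly and we are done) and $\|\nabla u_n\|_2\not\to 0$ (then the weight is bounded below, and the uniform monotonicity of the Laplacian yields $u_n\to u$ strongly), a dichotomy that must be handled carefully.
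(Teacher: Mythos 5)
Your proposal follows essentially the same route as the paper: apply Theorem \ref{AT2} on $D^{1,2}_{rad}(\mathbb{R}^N)$ with $h(t)=t^k$ for $k>\max\{N/(N-2),\,q/2\}$, verify the $(PS)$ condition via the Strauss lemma in the radial space, obtain the negative spheres from Lemma \ref{R1}, transfer multiplicity through the rescaling argument establishing $(AF_3)'$, and lift the critical points to $D^{1,2}(\mathbb{R}^N)$ by Palais' principle of symmetric criticality. Your write-up in fact supplies more detail on the $(PS)$ verification (the dichotomy on $\|\nabla u_n\|_{L^2}$ and the compactness of $\Psi'$ on radial sequences) than the paper, which only sketches this step, and the details you give are sound.
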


\section{Application: The discontinuous case}

In this section we establish the existence of nontrivial solution for the problem
\begin{equation} \label{DP}
-\Delta u(x) \in \partial G(u(x)), \quad \mbox{a.e. in} \quad \mathbb{R}^N,
\end{equation}
where $N \geq 1$, $G$ is the primitive of a function $g:\mathbb{R} \to \mathbb{R}$ given by $g(s)=f(s)-s$, that is, 
$$
G(s)=\int_{0}^{s}g(t)\,dt=\int_{0}^{s}f(t)\,dt-\frac{1}{2}|s|^{2}=F(s)-\frac{1}{2}|s|^{2},
$$
and $\partial G(s)$ is the generalized gradient of $G$ at $s \in \mathbb{R}$, given by 
$$
\partial G(s)=[\underline{g}(s),\overline{g}(s)]
$$
where
$$
\underline{g}(s)=\displaystyle \lim_{r\downarrow 0}\mbox{ess
	inf}\left\{ g(t);|s-t|<r\right\} \quad \mbox{and} \quad \overline{g}(s)=\lim_{r\downarrow 0}\mbox{ess sup}\left\{
g(t);|s-t|<r\right\} .
$$

When $g$ is a continuous function, which is equivalent to say that $f$ is continuous, we know that $G \in C^{1}(\mathbb{R}, \mathbb{R})$, and in this case  
$$
\partial G(s)=\{g(s)\}, \quad \forall s \in \mathbb{R}.
$$

In this section, we assume that $f$ can have a finite number of  discontinuity points $a_1,a_2, ...., a_p \in (0,+\infty)$. Moreover, we are assuming the following conditions on $f$: \\ 

\noindent  $(f_1)$ \,\, $\displaystyle \lim_{s \rightarrow 0} \frac{f(s)}{s}=0$. \\

\noindent $(f_2)$ \,\, There are $A,B>0$ such that 
$$
|f(s)| \leq A|s|+B|s|^{q}, \quad \forall s \in \mathbb{R},
$$
for some $q \in (1, 2^{\ast}-1)$ where $2^{\ast}= \frac{2N}{N-2}$ if $N \geq 3$ and $2^*=+\infty$ if $N=1,2$.

\noindent $(f_3)$ \,\, $f(s)>0,$ \quad $\forall s>0$. \\

\noindent $(f_4)$ \,\, There is $\tau>0$ and $\tau \not= a_i $ for $i=1,2,..,p$ such that $F(\tau)-\frac{1}{2}|\tau|^{2}>0$. \\

Since we intend to find a nonnegative solution, in what follows we assume that 
$$
f(s)=0, \quad \forall s<0. 
$$

Hereafter, by a solution we understand as being a function $u \in W_{loc}^{2, \frac{q+1}{q}}(\mathbb{R}^{N}) \cap H^{1}(\mathbb{R}^{N})$ that verifies (\ref{DP}), or equivalently, the problem below
\begin{equation} \label{equivalente}
-\Delta u(x)  +u(x) \in \left[\underline{f}(u(x)), \overline{f}(u(x))\right], \quad \mbox{a.e. in} \quad \mathbb{R}^N.
\end{equation}
For the case where $f$ is a continuous function, the above solution must verify the equation 
$$
-\Delta u(x)  +u(x) = f(u(x)), \quad \mbox{a.e. in} \quad  \mathbb{R}^N.
$$

Following the same ideas explored in the previous section, we will consider the functional $J_h:H^{1}(\mathbb{R}^N) \to \mathbb{R}$ given by
$$
J_k(u)=\frac{1}{k}\left(\frac{1}{2}\int_{\mathbb{R}^N}|\nabla u|^{2}\,dx \right)^{k}+\frac{1}{2}\int_{\mathbb{R}^N}|u|^{2}\,dx-\int_{\mathbb{R}^N}F(u)\,dx,
$$
which is a locally Lipschitz functional. The reader is invited to see that Lemmas \ref{L2} and \ref{L3} of Section 3 still hold in the present section, however related to the Lemma \ref{L1} we need to do some modifications in its proof. From now on, $\Phi$ is as in (\ref{phi1}) and $\Psi$ is given by 
\begin{equation} \label{psi2}
\Psi(u)=\int_{\mathbb{R}^N}F(u)\,dx -\frac{1}{2}\int_{\mathbb{R}^N}|u|^{2}\,dx, \quad \forall u \in H^{1}(\mathbb{R}^N).
\end{equation}

\begin{lemma} \label{L5} Let $\lambda >0$ and $u \in H^{1}(\mathbb{R}^N)$ such that $0 \in \Phi'(u) - \lambda\partial \Psi(u)$, where $\Phi$ and $\Psi$ were given in (\ref{phi1}) and (\ref{psi2}) respectively.  Setting $v(x)=u(x/\sqrt{\lambda})$, we have that $0 \in \Phi'(v) - \partial \Psi(v)$. Hence, $(\Phi,\Psi) \in (\mathcal{AP})$.
\end{lemma}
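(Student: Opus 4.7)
The plan is to mimic the scaling argument of Lemma~\ref{L1}, but with two additional ingredients: (i) a pointwise characterization of $\partial\Psi(u)$ so that the abstract inclusion can be turned into an a.e.\ differential inclusion, and (ii) the fact that the quadratic term $-\tfrac12\int|u|^2\,dx$ is of class $C^1$, so it behaves transparently under generalized differentiation.

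First I would unpack the inclusion $0\in\Phi'(u)-\lambda\partial\Psi(u)$. Writing $\Psi=\Psi_1+\Psi_2$ with $\Psi_1(u)=\int F(u)\,dx$ and $\Psi_2(u)=-\tfrac12\int|u|^2\,dx$, the sum rule for generalized gradients (with $\Psi_2\in C^1$) gives $\partial\Psi(u)=\partial\Psi_1(u)-u$. By the Aubin--Clarke characterization of the generalized gradient of an integral functional (see Chang~\cite{Chang}), any selection $\zeta\in\partial\Psi_1(u)$ is represented by a measurable function $w(x)\in[\underline{f}(u(x)),\overline{f}(u(x))]$ a.e.\ in $\mathbb{R}^N$. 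Thus the inclusion translates into: there exists a measurable $w$ with $w(x)\in[\underline{f}(u(x)),\overline{f}(u(x))]$ a.e.\ such that
$$
\int_{\mathbb{R}^N}\nabla u\cdot\nabla\varphi\,dx-\lambda\int_{\mathbb{R}^N}(w-u)\varphi\,dx=0,\quad\forall\,\varphi\in H^{1}(\mathbb{R}^N),
$$
i.e., $-\Delta u=\lambda(w-u)$ a.e. Standard elliptic regularity (applied to the right-hand side, which lies in $L^{(q+1)/q}_{loc}$ by $(f_2)$) then places $u$ in $W^{2,(q+1)/q}_{loc}(\mathbb{R}^N)$, legitimizing the pointwise equation.

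Next I would perform the scaling $v(x)=u(x/\sqrt{\lambda})$. A direct computation yields $-\Delta v(x)=-\tfrac{1}{\lambda}\Delta u(x/\sqrt{\lambda})$, so from the equation satisfied by $u$,
$$
-\Delta v(x)=w(x/\sqrt{\lambda})-u(x/\sqrt{\lambda})=\widetilde w(x)-v(x),\qquad\text{a.e. }x\in\mathbb{R}^N,
$$
where $\widetilde w(x):=w(x/\sqrt{\lambda})$. Because the scaling $x\mapsto x/\sqrt{\lambda}$ is a diffeomorphism that preserves the a.e.-relation, and because $v(x)=u(x/\sqrt{\lambda})$, the membership $w(y)\in[\underline{f}(u(y)),\overline{f}(u(y))]$ a.e.\ transfers to $\widetilde w(x)\in[\underline{f}(v(x)),\overline{f}(v(x))]$ a.e.

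Finally I would reverse the characterization: the selection $\widetilde w-v$ represents an element of $\partial\Psi_1(v)-v=\partial\Psi(v)$, so the identity $-\Delta v=\widetilde w-v$ is exactly $0\in\Phi'(v)-\partial\Psi(v)$. Since $v\neq 0$ whenever $u\neq 0$, this establishes $(\Phi,\Psi)\in(\mathcal{AP})$. The main (and really only) obstacle is technical bookkeeping: one must be careful that the Aubin--Clarke representation applies in $\mathbb{R}^N$ (unbounded domain) and that the pointwise selection $w$ is genuinely measurable and scales correctly — both standard but worth writing out explicitly.
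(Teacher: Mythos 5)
Your route is genuinely different from the paper's: you pass through the Aubin--Clarke pointwise representation of $\partial\Psi_1(u)$, elliptic regularity, and a scaling of the measurable selection, in the spirit of Lemma \ref{L1}. The paper instead never leaves the abstract level: it writes the hypothesis $0\in\Phi'(u)-\lambda\partial\Psi(u)$ as the inequality $\Psi^{0}(u,w)\geq\frac{1}{\lambda}\langle\Phi'(u),w\rangle$ for all $w$, and performs the change of variables $x\mapsto x/\sqrt{\lambda}$ directly inside the generalized directional derivative (using that under this scaling $\Psi$ picks up a factor $\lambda^{N/2}$ and $\Phi$ a factor $\lambda^{(N-2)/2}$, and $\frac{1}{\lambda}\cdot\lambda^{N/2}=\lambda^{(N-2)/2}$), obtaining $\Psi^{0}(v,w_\lambda)\geq\langle\Phi'(v),w_\lambda\rangle$, i.e.\ $\Phi'(v)\in\partial\Psi(v)$. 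The forward half of your argument (inclusion $\Rightarrow$ a.e.\ differential inclusion $\Rightarrow$ $W^{2,(q+1)/q}_{loc}$ regularity $\Rightarrow$ scaled pointwise equation) is sound.

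The genuine gap is in your last step, ``reverse the characterization.'' Chang's theorem \cite{Chang} gives only the inclusion $\partial\Psi_1(v)\subseteq\{w:\ w(x)\in[\underline{f}(v(x)),\overline{f}(v(x))]\ \text{a.e.}\}$; the converse --- that an \emph{arbitrary} measurable selection of $[\underline{f}(v(\cdot)),\overline{f}(v(\cdot))]$ defines an element of $\partial\Psi_1(v)$ --- requires $F$ to be regular in Clarke's sense (roughly, it fails when $f$ has downward jumps, since then $F'(t;1)=f(t^{+})<\overline{f}(t)=F^{0}(t;1)$), and no such hypothesis is imposed here. So the assertion that $\widetilde{w}-v$ ``represents an element of $\partial\Psi(v)$'' is unjustified as written. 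The fix is to avoid the pointwise detour for this step: the functional $\varphi\mapsto\int_{\mathbb{R}^N}(\widetilde{w}-v)\varphi\,dx$ is precisely the image of $\frac{1}{\lambda}\Phi'(u)\in\partial\Psi(u)$ under the scaling isomorphism $T_\lambda u=u(\cdot/\sqrt{\lambda})$, and since $\Psi\circ T_\lambda=\lambda^{N/2}\Psi$, the chain rule for generalized gradients under a linear homeomorphism gives $T_\lambda^{*}\partial\Psi(v)=\lambda^{N/2}\partial\Psi(u)$, which places that specific functional in $\partial\Psi(v)$ without any converse representation theorem. This is exactly what the paper's change-of-variables computation accomplishes.
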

\begin{proof} By hypothesis $0 \in  \Phi'(u) - \lambda\partial \Psi(u)$.  By \cite{Chang, clarke}, we know that the generalized gradient of $\Psi$ at $u$  is the set 
	$$
	\partial \Psi(u)=\left\{ \mu \in  (H^{1}(\mathbb{R}^N))^*\,:\, \Psi^{0}(u,v) \geq \langle \mu,w \rangle, \, \forall w \in H^{1}(\mathbb{R}^N) \right\},
	$$
	where $\Psi^{0}(u,v)$ denotes the directional derivative of $\Psi$ on $u$ in the direction of $v \in H^{1}(\mathbb{R}^N)$, which is defined by
	$$
	\Psi^{0}(u,v)=\limsup_{\xi \to 0, \tau \downarrow 0}\frac{\Psi(u+\xi+\tau w)-\Psi(u+\xi)}{\tau}.
	$$
From this, 
\begin{equation} \label{Z0}
\limsup_{\xi \to 0, \tau \downarrow 0}\frac{\Psi(u+\xi+\tau w)-\Psi(u+\xi)}{\tau} \geq \frac{1}{\lambda}\int_{\mathbb{R}^N}\nabla u \nabla w\,dx, \quad \forall w \in H^{1}(\mathbb{R}^N).
\end{equation} 
Setting $v(x)=u(x/\sqrt{\lambda}), w_\lambda(x)=w(x/\sqrt{\lambda})$ and $\xi_\lambda(x)=\xi(x/\sqrt{\lambda})$, a change variable permits to rewrite (\ref{Z0}) of the form
\begin{equation} \label{Z1}
\limsup_{\xi_\lambda \to 0, \tau \downarrow 0}\frac{\Psi(v+\xi_\lambda+\tau w_\lambda)-\Psi(v+\xi_\lambda)}{\tau} \geq \int_{\mathbb{R}^N}\nabla v \nabla w_\lambda\,dx, \quad \forall w_\lambda \in H^{1}(\mathbb{R}^N),
\end{equation} 
from where it follows that
$$
0 \in \Phi'(v) - \partial \Psi(v).
$$
This shows the desired result. 
\end{proof}	

From the above study  we derive the following result
\begin{theorem} \label{T3} Assume $(f_1)-(f_4)$. Then (\ref{BL}) has a nontrivial solution. 
\end{theorem}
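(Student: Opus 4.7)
The plan is to apply Theorem \ref{AT1} with $X = H^{1}(\mathbb{R}^{N})$, the functionals $\Phi$ and $\Psi$ defined in (\ref{phi1}) and (\ref{psi2}), and the compatible function $h(t) = t^{k}$ for an exponent $k$ chosen large enough (in particular $k > N/(N-2)$ when $N \geq 3$). Once the theorem produces a nontrivial $u \in H^{1}(\mathbb{R}^{N})$ with $0 \in \partial J(u) = \Phi'(u) - \partial\Psi(u)$, Chang's theorem on the Clarke subdifferential of the integral functional $u \mapsto \int F(u)\,dx$ will translate this abstract inclusion into the pointwise differential inclusion (\ref{equivalente}), which is equivalent to (\ref{DP}).

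First I would verify $(\Phi,\Psi) \in (\mathcal{AF})_{X}$. Property $(AF_{1})$ follows from $(f_{2})$ and the Sobolev embedding $H^{1}(\mathbb{R}^{N}) \hookrightarrow L^{q+1}(\mathbb{R}^{N})$; $(AF_{2})$ is clear since $\Phi(u) = \tfrac{1}{2}\|\nabla u\|_{2}^{2}$ vanishes only at $u = 0$ in $H^{1}(\mathbb{R}^{N})$; and $(AF_{3})$ is exactly Lemma \ref{L5}.

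To see that $h(t) = t^{k} \in \mathcal{C}_{\Phi,\Psi}$, I would follow the route of Lemmas \ref{L2} and \ref{L3}. The ALS half of $(H_{2})$ transfers directly: since the convention $f(s) = 0$ for $s \leq 0$ forces $F \geq 0$, we have $\int F(u)\,dx \leq \int F(|u|)\,dx$, so passing to $|u|$ and then to the Schwarz rearrangement $u^{*}$ cannot raise $J_{h}$, and Strauss' lemma then yields the required liminf inequality along the radial sequence. For coercivity, $(f_{1})$ and $(f_{2})$ give $|F(s)| \leq \tfrac{\epsilon}{2}|s|^{2} + C_{\epsilon}|s|^{q+1}$ for each $\epsilon > 0$; absorbing the $\epsilon$-term into $\tfrac{1}{2}\|u\|_{2}^{2}$ and dominating $C_{\epsilon}\|u\|_{q+1}^{q+1}$ by a small multiple of $\|\nabla u\|_{2}^{2k} + \|u\|_{2}^{2}$ via Gagliardo--Nirenberg and Young's inequality (possible once $k$ is large enough) gives $J_{h}(u) \to +\infty$ as $\|u\|_{H^{1}} \to +\infty$. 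For $(H_{3})$, condition $(f_{4})$ produces $\phi \in C_{0}^{\infty}(\mathbb{R}^{N})$ with $\int G(\phi)\,dx > 0$ (take $\phi \equiv \tau$ on a large ball with a thin transition layer), and its dilation $\phi_{t}(x) = \phi(x/t)$ satisfies $\Psi(\phi_{t}) = t^{N}\Psi(\phi)$ and $\Phi(\phi_{t}) = t^{N-2}\Phi(\phi)$, so $\Psi(\phi_{t})/h(\Phi(\phi_{t}))$ is of order $t^{N-k(N-2)}$ and blows up as $t \to 0^{+}$.

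Theorem \ref{AT1} then yields a nontrivial $u \in H^{1}(\mathbb{R}^{N})$ with $\Phi'(u) \in \partial\Psi(u)$. The step I expect to be the main obstacle is the pointwise interpretation of this inclusion: invoking Chang's theorem on the Clarke subdifferential of an integral functional with locally Lipschitz integrand $F$ (whose scalar Clarke gradient equals $[\underline{f}, \overline{f}]$), one finds that the elements of $\partial \Psi(u)$ are represented by measurable selections $\eta$ with $\eta(x) \in [\underline{f}(u(x)), \overline{f}(u(x))] - u(x)$ almost everywhere, so $\Phi'(u) = \eta$ reads $-\Delta u + u \in [\underline{f}(u), \overline{f}(u)]$ a.e., which is (\ref{equivalente}) and hence (\ref{DP}). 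Standard elliptic regularity applied to any such selection then upgrades $u$ to $W^{2,(q+1)/q}_{loc}(\mathbb{R}^{N}) \cap H^{1}(\mathbb{R}^{N})$, completing the proof.
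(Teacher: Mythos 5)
Your proposal is correct and follows essentially the same route as the paper: apply Theorem \ref{AT1} with $\Phi$, $\Psi$ as in (\ref{phi1}), (\ref{psi2}) and $h(t)=t^k$, verifying $(AF_3)$ by the rescaling argument of Lemma \ref{L5}, the $(\mathcal{ALS})$ property via Schwarz symmetrization and Strauss' lemma as in Lemma \ref{L2}, coercivity from the subcritical growth of $F$ plus the mass term, and $(H_3)$ by the dilation $\phi_t$ as in Lemma \ref{L3}. Your closing step (Chang's representation of $\partial\Psi$ and elliptic regularity to pass from $0\in\partial J(u)$ to the pointwise inclusion (\ref{equivalente})) is exactly what the paper leaves implicit, so the two arguments agree in substance.
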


\section{Application: The elliptic system case}

In this section, we will show a version of the  global minimization trick that can be used to prove the existence of nontrivial solution for the following class of elliptic systems 
\begin{equation} \label{ES}
\left\{
\begin{array}{l}
-\Delta u=F_u(u,v), \quad \mbox{in} \quad \mathbb{R}^N,  \\
-\Delta v=F_v(u,v), \quad \mbox{in} \quad  \mathbb{R}^N, \\
\end{array}
\right.
\end{equation}
where $F:\mathbb{R}^2 \to \mathbb{R}$ is a $C^{1}$-function such that
$$
\limsup_{|(s,t)|\to 0}\frac{F(t,s)}{|(t,s)|^{2}}\leq -m<0. \leqno{(F_1)}
$$ 
There is $q \in(2,2^*)$ such that 
$$
\limsup_{|(s,t)|\to 0}\frac{|F(t,s)|}{|(t,s)|^{q}}<+\infty. \leqno{(F_2)}
$$ 
There is $(t_0,s_0) \in \mathbb{R}^2$ such that 
$$
F(t_0,s_0)>0. \leqno{(F_3)}
$$
The energy functional $J:X=H^{1}(\mathbb{R}^N) \times H^{1}(\mathbb{R}^N) \to \mathbb{R} $ associated with (\ref{ES}) is given by 
$$
J(u,v)=\frac{1}{2}\int_{\mathbb{R}^N}(|\nabla u|^{2}+|\nabla v|^{2})\,dx-\int_{\mathbb{R}^N}F(u,v)\,dx.
$$

In order to use the Theorem \ref{AT1} in the present case, it is necessary to do some adjusts. Here, the idea is the following: From now on, we say that a $C^1$-functional $J:X \to \mathbb{R}$ is ${\bf C^1}$-{\it {\bf Almost Lower Semicontinuous}}  with relation to $X$, denoted by $J \in C^{1}(\mathcal{ALS})_X$, if $J$ is bounded from below in $X$ and for any minimizing sequence $(u_n) \subset X$ for $J$, there exist a $(PS)$ sequence $(v_n) \subset X$ and $v \in X \setminus \{0\}$ with $v_n \rightharpoonup v$ in $X$ such that
$$
J(u_n) \geq J(v_n) \geq \inf_{u \in X}J(u), \quad \forall n \in \mathbb{N}
$$ 
and
$$
J'(v_n) \to 0.
$$
Moreover, we will say that a $C^{1}$-function $h:[0,+\infty) \to [0,+\infty)$ is {\bf *-compatible} with the functionals $\Phi,\Psi:X \to \mathbb{R}$, denoted by $h \in \mathcal{C}^*_{\Phi,\Psi}$, when $h \in \mathcal{C}_{\Phi,\Psi}$ replacing $(\mathcal{ALS})_X$ by $C^{1}(\mathcal{ALS})_X$ and $J'_h$ is weakly sequentially continuous $(\mathcal{WSC})$, that is, 
$$
v_n\rightharpoonup v \quad \mbox{in} \quad X \Rightarrow  J_h'(v_n) \stackrel{*}{\rightharpoonup} J_h'(v) \quad \mbox{in} \quad X^*,
$$ 
more precisely, 
$$
v_n\rightharpoonup v \quad \mbox{in} \quad X \Rightarrow  J_h'(v_n)\phi \stackrel{*}{\rightharpoonup} J_h'(v)\phi, \quad \forall \phi \in X.
$$ 
As an immediate consequence of $(\mathcal{WSC})$, we have $J_h'(v)=0$, that is, $v$ is a critical point of $J_h$.

\begin{theorem} \label{AT3} Let $X$ be a reflexive Banach space and consider three $C^1$-functionals $J,\Phi,\Psi:X \to \mathbb{R}$ such that 
	$$
	J(u)=\Phi(u)-\Psi(u), \quad \forall u \in X,
	$$	
	with $(\Phi,\Psi) \in (\mathcal{AF})_X$. If $\mathcal{C}^*_{\Phi,\Psi}$ is not empty, then functional $J$ has a nontrivial critical point.
\end{theorem}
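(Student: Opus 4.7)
The plan is to mimic the proof of Theorem \ref{AT1}, replacing the subdifferential calculus by the $C^{1}$ infrastructure built into $\mathcal{C}^{*}_{\Phi,\Psi}$. First I would pick $h \in \mathcal{C}^{*}_{\Phi,\Psi}$ and form $J_h(u) = h(\Phi(u)) - \Psi(u)$. By $(H_2)$, with $C^{1}(\mathcal{ALS})_X$ in place of $(\mathcal{ALS})_X$, the functional $J_h$ is coercive and in particular bounded from below on $X$. Pick a minimizing sequence $(u_n) \subset X$ for $J_h$.

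Next, invoke the hypothesis $J_h \in C^{1}(\mathcal{ALS})_X$: it hands us directly a Palais--Smale sequence $(v_n) \subset X$ and an element $v \in X \setminus \{0\}$ with $v_n \rightharpoonup v$, $J_h(v_n) \to \inf_X J_h$, and $J_h'(v_n) \to 0$ in $X^{*}$. The new ingredient, weak sequential continuity of $J_h'$, lets us pass to the limit and conclude that $J_h'(v) = 0$, i.e.,
$$
h'(\Phi(v))\,\Phi'(v) - \Psi'(v) = 0.
$$
Since $v \neq 0$, $(AF_2)$ gives $\Phi(v) > 0$, and then $(H_1)$ yields $h'(\Phi(v)) > 0$. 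Setting $\lambda := 1/h'(\Phi(v)) > 0$ we obtain $\Phi'(v) = \lambda\,\Psi'(v)$, which in the present $C^{1}$ setting (where $\partial \Psi(v) = \{\Psi'(v)\}$) is exactly the hypothesis of the absorption property $(AF_3)$. Applying $(AF_3)$ produces $w \in X \setminus \{0\}$ with $\Phi'(w) - \Psi'(w) = 0$, i.e., $J'(w) = 0$, as required.

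Structurally, the proof is a short bookkeeping exercise: weak sequential continuity of $J_h'$ plays the role of the Clarke subdifferential machinery \cite[Proposition 6]{clarke2} used in Theorem \ref{AT1}, while the nontriviality of the weak limit, previously deduced from $(H_3)$ via $\inf J_h < 0$, is built directly into the definition of $C^{1}(\mathcal{ALS})_X$. I therefore expect no real obstacle in the proof of Theorem \ref{AT3} itself; the work is pushed into verifying membership in $\mathcal{C}^{*}_{\Phi,\Psi}$ for the elliptic system (\ref{ES}). There, producing a Palais--Smale sequence with nontrivial radial weak limit will require a Schwarz symmetrization argument applied component-wise, and the weak sequential continuity of $J_h'$ on $H^{1}_{rad}(\mathbb{R}^{N}) \times H^{1}_{rad}(\mathbb{R}^{N})$ should follow from a vector-valued Strauss-type compactness lemma for the nonlinear term $F(u,v)$ under $(F_1)$--$(F_3)$.
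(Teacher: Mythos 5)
Your proposal is correct and follows essentially the same route as the paper's own proof: minimize the auxiliary functional $J_h$, use $C^{1}(\mathcal{ALS})_X$ to extract a Palais--Smale sequence with nontrivial weak limit, pass to the limit via weak sequential continuity of $J_h'$ to get $J_h'(v)=0$, and then rescale via $(AF_3)$. The only (welcome) addition is that you spell out why $h'(\Phi(v))>0$ using $(AF_2)$ and $(H_1)$, a detail the paper leaves implicit.
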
 
\begin{proof}\, In what follows we fix $h \in \mathcal{C}^*_{\Phi,\Psi}$ and the functional $J_h:X \to \mathbb{R}$ defined by
$$
J_h(u)=h(\Phi(u))-\Phi(u), \quad \forall u \in X.
$$
Arguing as in the proof of Theorem \ref{AT1}, there is a sequence $(u_n) \subset X$ satisfying
$$
\lim_{n \to +\infty}J_h(u_n)=\inf_{u \in X}J_h(u)=J^{\infty}_h<0.
$$
From $C^{1}(\mathcal{ALS})_X$, there is $(v_n) \subset X$ such that 
$$
\lim_{n \to +\infty}J_h(v_n)=J^{\infty}_h \quad \mbox{and} \quad  \lim_{n \to +\infty}J'_h(v_n)=0. 
$$
Since $X$ is reflexive, we can assume that $v_n \rightharpoonup v$ in $X$. This together with the fact that $J_h' \in (\mathcal{WSC})$ gives  $J_h'(v_n) \stackrel{*}{\rightharpoonup} J_h'(v)$. Then, $J_h'(v)=0$ and $v$ is a critical point of $J_h$. By using $(AF_3)$, let us deduce that $J$ has a nontrivial critical point. 
\end{proof}

After the above study we are ready to apply Theorem \ref{AT3} by considering the functionals 
\begin{equation} \label{phi3}
\Phi(u,v)=\frac{1}{2}\int_{\mathbb{R}^N}(|\nabla u|^{2}+|\nabla v|^{2})\,dx,
\end{equation}
\begin{equation} \label{psi3}
\Psi(u,v)=\int_{\mathbb{R}^N}F(u,v)\,dx
\end{equation}
and 
\begin{equation} \label{jr2}
J_h(u,v)=\frac{1}{k}\left(\frac{1}{2}\int_{\mathbb{R}^N}(|\nabla u|^{2}+|\nabla v|^{2})\,dx\right)^{k}-\int_{\mathbb{R}^N}F(u,v)\,dx,
\end{equation}
for $k > \frac{N}{N-2}$. A simple computation shows that $\Phi$ and $\Psi$ verify $(AF_1)-(AF_2)$.  The lemma below ensures that $(AF_3)$ also holds.

\begin{lemma} \label{L5} The functionals $\Phi$ and $\Psi$ given in (\ref{phi3}) and (\ref{psi3}) belongs to $(\mathcal{AP})$. 
	
\end{lemma}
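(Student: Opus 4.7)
The plan is to mimic Lemma \ref{L1} from the scalar case, exploiting that the system \eqref{ES} is scaling-invariant in exactly the same way as a single Berestycki–Lions equation. Since $F$ is a $C^{1}$-function, $\Psi$ is a $C^{1}$-functional on $X=H^{1}(\mathbb{R}^N)\times H^{1}(\mathbb{R}^N)$, so $\partial\Psi(u,v)=\{\Psi'(u,v)\}$ and the inclusion $0\in\Phi'(u,v)-\lambda\,\partial\Psi(u,v)$ is just the equality $\Phi'(u,v)=\lambda\Psi'(u,v)$. In weak form this means
\begin{equation*}
\int_{\mathbb{R}^N}(\nabla u\cdot\nabla\varphi+\nabla v\cdot\nabla\psi)\,dx=\lambda\int_{\mathbb{R}^N}\bigl(F_u(u,v)\varphi+F_v(u,v)\psi\bigr)\,dx
\end{equation*}
for every $(\varphi,\psi)\in X$, i.e.\ $(u,v)$ is a weak solution of $-\Delta u=\lambda F_u(u,v),\ -\Delta v=\lambda F_v(u,v)$ in $\mathbb{R}^N$. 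The growth conditions $(F_1)$–$(F_2)$ provide control of $F_u$ and $F_v$ by a Sobolev-admissible power of $|(u,v)|$, so standard elliptic regularity gives $(u,v)\in W^{2,p}_{loc}(\mathbb{R}^N)\times W^{2,p}_{loc}(\mathbb{R}^N)$ for some $p>1$ and the system holds a.e.

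Next I would define the rescaled pair $\tilde u(x)=u(x/\sqrt{\lambda})$, $\tilde v(x)=v(x/\sqrt{\lambda})$. A dilation preserves $H^{1}(\mathbb{R}^N)$ (only the Dirichlet and $L^2$ norms get multiplied by powers of $\lambda$), so $(\tilde u,\tilde v)\in X$, and clearly $(\tilde u,\tilde v)\neq(0,0)$ whenever $(u,v)\neq(0,0)$. A direct chain-rule computation yields $\Delta\tilde u(x)=\tfrac{1}{\lambda}(\Delta u)(x/\sqrt\lambda)$, so
\begin{equation*}
-\Delta\tilde u(x)=-\tfrac{1}{\lambda}(\Delta u)(x/\sqrt\lambda)=F_u(u(x/\sqrt\lambda),v(x/\sqrt\lambda))=F_u(\tilde u(x),\tilde v(x)),
\end{equation*}
and analogously $-\Delta\tilde v=F_v(\tilde u,\tilde v)$ a.e.\ in $\mathbb{R}^N$. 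Multiplying by test functions and integrating then gives $\Phi'(\tilde u,\tilde v)=\Psi'(\tilde u,\tilde v)$, i.e.\ $0\in\Phi'(\tilde u,\tilde v)-\partial\Psi(\tilde u,\tilde v)$, so one takes $w=(\tilde u,\tilde v)$ and the Absorption Property is proved.

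I expect no serious obstacle: the only points to verify carefully are that the weak form of $(AF_3)$ really is the PDE system (immediate from the $C^{1}$ structure of $\Phi,\Psi$), and that dilation preserves membership in $X$, which is a one-line change-of-variables computation. Regularity is the only slightly technical step, and it is already invoked in exactly the same spirit in Lemma \ref{L1}; the growth assumptions $(F_1)$–$(F_2)$ ensure that the nonlinearities have at most $2^*-1$ growth, which is all that is needed for the standard bootstrap to place $(u,v)$ in a local Sobolev space strong enough to justify the a.e.\ pointwise formulation used in the scaling argument.
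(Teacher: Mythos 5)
Your proposal is correct and follows essentially the same route as the paper: pass from the inclusion to the weak Euler--Lagrange system (using that $\Psi$ is $C^1$, so $\partial\Psi(u,v)=\{\Psi'(u,v)\}$), then apply the dilation $(x\mapsto x/\sqrt{\lambda})$ to absorb the multiplier, exactly as in the scalar Lemma \ref{L1}. The paper's own proof is just a terser version of this scaling argument; your extra care with local regularity to justify the a.e.\ pointwise form is consistent with how Lemma \ref{L1} is argued and introduces no gap.
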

\begin{proof} Let $\lambda >0$ be a real number satisfying 
$$
\Phi'(u,v)-\lambda \Psi'(u,v)=0,
$$	
for some $(u,v) \in X$. Then, $(u,v)$ is a solution of the following system 
\begin{equation} \label{ES1}
\left\{
\begin{array}{l}
-\Delta w= \lambda F_w(w,z), \quad \mbox{in} \quad \mathbb{R}^N, \\
-\Delta z=\lambda F_z(w,z), \quad \mbox{in} \quad  \mathbb{R}^N. \\
\end{array}
\right.
\end{equation}
Setting $(\phi(x),\psi(x))=(w(x/\sqrt{\lambda}),z(x/\sqrt{\lambda}))$, it is easy to see that $(\phi,\psi)$ is a solution of the system
\begin{equation} \label{ES2}
\left\{
\begin{array}{l}
-\Delta \psi= F_\phi(\phi,\psi), \quad \mbox{in} \quad \mathbb{R}^N, \\
-\Delta \psi= F_\psi(\phi,\psi), \quad \mbox{in} \quad  \mathbb{R}^N, \\
\end{array}
\right.
\end{equation}
and so, 
$$
\Phi'(\phi,\psi)-\Psi'(\phi,\psi)=0,
$$	
finishing the proof. 
\end{proof}	

\begin{lemma} \label{L5} The functional $J_h$ belongs to $C^{1}(\mathcal{ALS})_X$.
	
\end{lemma}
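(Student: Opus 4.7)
The plan is to verify the two requirements in the definition of $C^{1}(\mathcal{ALS})_X$: first, that $J_{h}$ is bounded from below, and second, that every minimizing sequence admits, after Ekeland's principle and translation in $\R^{N}$, a Palais--Smale subsequence with a nonzero weak limit.

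My first step would be to establish coercivity, which automatically gives boundedness from below. Conditions $(F_{1})$ and $(F_{2})$, together with the standard Beresticki--Lions truncation of $F$ at infinity (as in Section 3 for $g$), produce a pointwise estimate of the form $F(s,t)\le -m|(s,t)|^{2}+C|(s,t)|^{2^{*}}$ on $\R^{2}$. Integrating over $\R^{N}$ and using the Sobolev embedding $H^{1}(\R^{N})\hookrightarrow L^{2^{*}}(\R^{N})$ gives
\[
\Psi(u,v)\le C\bigl(\|\nabla u\|_{2}^{2}+\|\nabla v\|_{2}^{2}\bigr)^{2^{*}/2},
\]
while the leading part of $J_{h}(u,v)$ behaves like $\bigl(\|\nabla u\|_{2}^{2}+\|\nabla v\|_{2}^{2}\bigr)^{k}$. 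Since $k>N/(N-2)$ forces $2k>2^{*}$, the $k$-power term dominates as $\|(u,v)\|_{X}\to\infty$, and $J_{h}$ is coercive on $X$.

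Next I would apply Ekeland's variational principle in the Ghoussoub--Preiss form to any minimizing sequence $(u_{n},v_{n})$: it produces $(\tilde u_{n},\tilde v_{n})\subset X$ with $J_{h}(\tilde u_{n},\tilde v_{n})\le J_{h}(u_{n},v_{n})$, $J_{h}'(\tilde u_{n},\tilde v_{n})\to 0$, and still minimizing. Coercivity yields boundedness of $(\tilde u_{n},\tilde v_{n})$, and reflexivity of $X$ allows me to extract a subsequence with $(\tilde u_{n},\tilde v_{n})\rightharpoonup(w,z)$ in $X$. The delicate point is guaranteeing that $(w,z)\neq 0$, since in $\R^{N}$ mass can escape under translation. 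For this I would exploit $(H_{3})$, which provides $e\in X$ with $J_{h}(e)<0$, hence $J_{h}^{\infty}:=\inf_{X}J_{h}<0$, and argue by contradiction using Lions' vanishing lemma: if for every sequence $(y_{n})\subset\R^{N}$ the translated pair has zero weak limit, then
\[
\int_{\R^{N}}\bigl(|\tilde u_{n}|^{p}+|\tilde v_{n}|^{p}\bigr)\,dx\longrightarrow 0\quad\text{for every }p\in(2,2^{*}).
\]
Splitting $F$ into a near-origin part controlled by $|(s,t)|^{q}$ via $(F_{2})$ and a large-values part handled by the truncation, this vanishing would force $\Psi(\tilde u_{n},\tilde v_{n})\to 0$, so that $J_{h}(\tilde u_{n},\tilde v_{n})\ge -o(1)$, contradicting $J_{h}^{\infty}<0$. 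Therefore there exist translations $y_{n}\in\R^{N}$ such that $(v_{n}):=(\tilde u_{n}(\cdot+y_{n}),\tilde v_{n}(\cdot+y_{n}))$ converges weakly to some $v\neq 0$; by the translation invariance of $J_{h}$ and $J_{h}'$, the sequence $(v_{n})$ is still a minimizing $(PS)$ sequence with $J_{h}(v_{n})\le J_{h}(u_{n},v_{n})$, yielding exactly what $C^{1}(\mathcal{ALS})_{X}$ demands.

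The main obstacle is the non-vanishing argument: Ekeland's principle only guarantees boundedness and approximate criticality, and the non-compact setting $\R^{N}$ forces one to combine translation invariance with concentration-compactness to prevent loss of mass at infinity. Once the vanishing case has been excluded by the strict inequality $J_{h}^{\infty}<0$, the rest of the verification is straightforward.
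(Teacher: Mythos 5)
Your proposal is correct and follows essentially the same route as the paper: Ekeland's variational principle to upgrade the minimizing sequence to a $(PS)$ sequence, Lions' concentration--compactness to find translations $(y_n)$ carrying a fixed amount of mass, and translation invariance of $J_h$ to obtain a nonzero weak limit. The only difference is that you spell out why vanishing is excluded (via $J_h^{\infty}<0$ and the decay of $\Psi$ under vanishing), a step the paper leaves implicit when it invokes Lions' lemma.
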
	
\begin{proof} 
First of all, we know that 
$$
\inf_{(u,v) \in X}J_h(u,v)=J^{\infty}_h<0.
$$
Hence, there is a minimizing sequence $((u_n,v_n)) \subset X$ such that 
$$
J_h(u_n,v_n) \to J^{\infty}_h.
$$
By using Ekeland Variational principal, we can assume that $((u_n,v_n))$ is a $(PS)_{J^{\infty}_h}$ sequence for $J_h$, that is,  
$$
J_h(u_n,v_n) \to J^{\infty}_h \quad \mbox{in} \quad \mathbb{R}
$$
and 
$$
J'_h(u_n,v_n) \to 0 \quad \mbox{in} \quad X^*.
$$
By Lions \cite{Lions2}, there are $(y_n) \subset \mathbb{R}^N$ and $R,\beta>0$ such that
\begin{equation} \label{W1}
\int_{B_R(y_n)}(|u_n|^{2}+|v_n|^{2})\,dx\geq \beta, \quad \forall n \in \mathbb{N}.
\end{equation}
By using this information and setting  $(w_n(x),z_n(x))=(w_n(x+y_n),z_n(x+y_n))$, we derive that 
$$
J_h(w_n,z_n) \to J^{\infty}_h \quad \mbox{in} \quad \mathbb{R}
$$
and 
$$
J'_h(w_n,z_n) \to 0 \quad \mbox{in} \quad X.
$$
Since $((u_n,v_n))$ is a bounded sequence, we also have that $((w_n,z_n))$ is also bounded in $X$, and so, we can assume that $(w_n,z_n) \rightharpoonup (w,z)$ for some $(w,z) \in X$. Moreover, by (\ref{W1}), we derive that
$$
\int_{B_R(0)}(|w|^{2}+|z|^{2})\,dx\geq \beta,
$$ 
and so $(w,z) \not= (0,0)$. This proves that $J_h \in C^{1}(\mathcal{ALS})_X$.
\end{proof}

The above study guarantees that the result below holds
\begin{theorem} \label{T4} Assume $(F_1)-(F_3)$. Then (\ref{ES}) has a nontrivial solution. 
\end{theorem}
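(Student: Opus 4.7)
The plan is to apply Theorem \ref{AT3} directly, with $X = H^{1}(\mathbb{R}^N)\times H^{1}(\mathbb{R}^N)$, the functionals $\Phi,\Psi$ from (\ref{phi3})--(\ref{psi3}), and the test function $h(t) = t^{k}$ for a fixed $k > \frac{N}{N-2}$, producing the auxiliary functional $J_{h}$ of (\ref{jr2}). A nontrivial critical point of $J$ delivered by Theorem \ref{AT3} is exactly a weak solution of the system (\ref{ES}).

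Bookkeeping first: $(AF_{1})$ and $(AF_{2})$ are immediate from the explicit form of $\Phi$ and from the hypotheses on $F$; the absorption property $(AF_{3})$ is precisely the content of the first lemma of this section via the rescaling $(\phi,\psi)(x) = (w,z)(x/\sqrt{\lambda})$; and the $C^{1}(\mathcal{ALS})_{X}$ clause in the definition of $\mathcal{C}^{*}_{\Phi,\Psi}$ is precisely the second lemma of this section, where Ekeland + Lions concentration handled the technical weak-limit step. What still has to be verified are $(H_{1})$--$(H_{3})$ and the weak sequential continuity of $J_{h}'$.

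Condition $(H_{1})$ is trivial for $h(t)=t^{k}$. For the coercivity half of $(H_{2})$, I would combine $(F_{1})$--$(F_{2})$ with a subcritical growth control at infinity to estimate $\Psi(u,v) \leq C\,\|\nabla(u,v)\|_{L^{2}}^{2^{*}}$ while $h(\Phi(u,v)) \sim \|\nabla(u,v)\|_{L^{2}}^{2k}$; since $2k > 2^{*}$, coercivity follows. For $(H_{3})$, pick $\varphi = (\varphi_{1},\varphi_{2}) \in C_{0}^{\infty}(\mathbb{R}^N)^{2}$ with $\int_{\mathbb{R}^N} F(\varphi_{1},\varphi_{2})\,dx > 0$, which is possible by $(F_{3})$ and continuity of $F$ (a smooth bump centered where $F(t_{0},s_{0}) > 0$ does the job); setting $\varphi_{t}(x) = \varphi(x/t)$ yields $\Psi(\varphi_{t}) = t^{N}\Psi(\varphi)$ and $\Phi(\varphi_{t}) = t^{N-2}\Phi(\varphi)$, whence
$$
\frac{\Psi(\varphi_{t})}{h(\Phi(\varphi_{t}))} \;=\; t^{\,N - k(N-2)} \, \frac{\Psi(\varphi)}{h(\Phi(\varphi))} \;\longrightarrow\; +\infty \quad \text{as } t\to 0^{+},
$$
because $k > \frac{N}{N-2}$ forces $N - k(N-2) < 0$; so $e = \varphi_{t}$ works for $t$ small.

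The remaining and most delicate ingredient is the weak sequential continuity of $J_{h}'$. Given $(u_n,v_n) \rightharpoonup (u,v)$ in $X$, I would split $J_h'$ into its quadratic part, where $h'(\Phi(u_n,v_n))$ is controlled by boundedness of $(\Phi(u_n,v_n))$ and continuity of $h'$, and the bilinear form passes to the limit by weak $L^{2}$ convergence of gradients; and its nonlinear part, where Rellich--Kondrachov together with the subcritical growth of $F_u,F_v$ gives $F_u(u_n,v_n) \to F_u(u,v)$ and $F_v(u_n,v_n) \to F_v(u,v)$ in $L^{q'}_{\mathrm{loc}}(\mathbb{R}^N)$, sufficing to pass to the limit against $C_0^\infty$ test functions and, by density, against arbitrary test pairs in $X$. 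Once $h \in \mathcal{C}^{*}_{\Phi,\Psi}$ is established, Theorem \ref{AT3} produces a nontrivial critical point of $J$, which is the desired weak solution of (\ref{ES}). The main obstacle I anticipate is precisely this weak sequential continuity step: as written, $(F_{2})$ only controls $F$ near the origin, so one must lean on a matching growth bound at infinity (standard in the Berestycki--Lions framework) to obtain the global majorization $|F_u(u,v)|+|F_v(u,v)| \leq C(|u|^{q-1}+|v|^{q-1})$ needed to transfer local compactness into the global weak sequential continuity of $J_h'$.
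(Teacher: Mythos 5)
Your proposal follows essentially the same route as the paper: apply Theorem \ref{AT3} with $h(t)=t^{k}$, $k>\frac{N}{N-2}$, using the rescaling lemma for the absorption property $(AF_3)$ and the Ekeland--Lions concentration-compactness argument for the $C^{1}(\mathcal{ALS})_X$ property. You in fact verify several points the paper leaves implicit in this section (coercivity of $J_h$, condition $(H_3)$, and the weak sequential continuity of $J_h'$), and your remark that $(F_2)$ as printed only constrains $F$ near the origin --- so that a matching subcritical bound at infinity must be assumed to get the global majorization --- correctly flags what is evidently a typo in the statement of $(F_2)$.
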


\section{Final comments}

The Global minimization trick can be used for other classes of problems, here we have cited only some of them, for example, we can apply  the abstract theorems showed in the present article to prove the existence of nontrivial solution for the following problem:  
$$
(- \Delta)^{\alpha}u = g(u), \quad \mbox{in} \quad \mathbb{R}^N,
$$ 
with $\alpha \in (0,1)$ and $N \geq 1$.

\vspace{0.5 cm}

\end{document}